\documentclass[onefignum,onetabnum]{siamart251216}

\usepackage{mathdots}
\usepackage{amsfonts}
\usepackage{amssymb}
\usepackage{latexsym}
\usepackage{graphicx}
\usepackage{pgfplots}
\usepackage{todonotes}
\usepackage{epstopdf}
\usepackage{verbatim}
\usepackage{algorithm}
\usepackage{algorithmic}
\usepackage{hyperref}
\usepackage{amsopn,url}
\usepackage{amsmath}
\usepackage{amstext,amsfonts,bm,amssymb}
\usepackage{indentfirst}
\pgfplotsset{compat=1.18}

\def\subspace{\mathcal{Q}}
\def\Nystrom{Nystr{\"o}m}
\def\svdmethod{SVD-extract}
\def\leadingsubspace{\mathcal{U}_1}
\def\trailingsubspace{\mathcal{U}_2}
\def\projector{\mathcal{P}}
\def\realR{\mathbb{R}}

\newcommand{\image}{\operatorname{span}}

\newcommand{\diag}{{\rm diag}}

\newtheorem{remark}{{\sc Remark}}[section]

\makeatletter
\renewcommand{\@seccntformat}[1]{\csname the#1\endcsname. \,}
\makeatother

\title{Finding accurate eigenvalues and eigenvectors of positive semi-definite matrices given a subspace}
\author{%
Yuji Nakatsukasa \and
Zheng Tang\thanks{Mathematical Institute, University of Oxford
  (\email{nakatsukasa@maths.ox.ac.uk}, \email{zheng.tang@maths.ox.ac.uk}). YN is supported by the EPSRC grant EP/Y030990/1.}%
}
\headers{Finding accurate eigenvalues given a subspace}{Y. Nakatsukasa and Z. Tang}

\begin{document}

\maketitle
\begin{abstract}
We revisit a classical problem in numerical linear algebra: given an $k$-dimensional subspace $\subspace$ that approximates the leading eigenspace of an $n\times n$ positive semi-definite matrix $A$, the goal is to extract high-accuracy eigenvalues. The Rayleigh-Ritz (RR) method is the standard algorithm for the task, which has been shown to be optimal in several ways (when $A$ is symmetric, not necessarily positive semi-definite $A\succeq 0$). In this paper, we show that when $A\succeq 0$, alternative methods can outperform RR, while having the same computational complexity, that is, the main cost is in computing $AQ$, plus an $O(nk^2)$ term. In particular, we advocate the use of \Nystrom's method, showing that the approximate eigenvalues always have higher accuracy than RR, and the improvement can be arbitrarily large. The difference is significant, especially when $A$ has a fast-decaying spectrum. A similar improvement is numerically observed for the purpose of approximating the leading eigenvectors. In contrast, when the target eigenvalues are the trailing ones, the situation is reversed, and the \Nystrom\ method performs poorly; we suggest a remedy for this situation. 
\end{abstract}

\begin{keywords}
Eigenvalue problem, \Nystrom, positive semi-definite matrix, Rayleigh-Ritz, singular value decomposition, subspace method
\end{keywords}

\begin{MSCcodes}
65F15, 65F10, 15A18
\end{MSCcodes}

\tableofcontents

\section{Introduction}
The large-scale symmetric eigenvalue problem is ubiquitous and computationally challenging in numerical linear algebra. Solving such a problem plays an essential role in a wide range of applications not only in mathematics but also in various engineering fields~\cite{ambikasaran2015fast, axelsson2001finite, boffi2010finite, hu1985toeplitz, mastronardi1999computing, mena2018numerical}, and more recently in the machine learning community for kernel methods~\cite{belabbas2009spectral, gittens2016revisiting, williams2000using}.

In this paper, we mainly focus on the positive semi-definite case $A\succeq 0$. This is an important case for eigenvalue problems in machine learning and scientific computing applications, for example, computing the subset of the eigenpairs of the kernel matrix and the covariance matrix in the kernel methods~\cite{abedsoltan2023toward, fowlkes2004spectral, meanti2020kernel, rudi2017falkon} and principal component analysis~\cite{bishop2006pattern}, respectively. When the matrix is too large for a dense algorithm (e.g., the QR algorithm) to be applicable, the standard approach is to turn to subspace methods. Here, one finds a low-dimensional subspace $\subspace$ (e.g., by a Krylov subspace or the power method and their variants), and extracts eigenvalues and eigenvectors given $\subspace$. Rayleigh-Ritz (RR) has long been the de facto method for the task. In RR, one computes the approximate eigenvalues as those of $Q^\top AQ$, where $Q$ is an $n\times k$ matrix with orthonormal columns spanning the subspace $\subspace$. The complexity is that of computing $AQ$ and then $Q^\top(AQ)$ ($O(nk^2)$), in addition to finding the eigenvalues of $Q^\top AQ$ ($O(k^3)$), so $O(Nk+nk^2)$, where $N$ is the cost of performing a matrix-vector multiplication with $A$. 

When the matrix is positive semi-definite (PSD), additional techniques can be employed to address these problems by exploiting its structural information. Specifically, the PSD property allows us to adopt singular-value-based approaches, as it implies the equivalence between the eigenvalues and the singular values, and more generally, the eigendecomposition and the SVD. In particular, subspace iterations, Krylov subspace methods~\cite[Sec.~12,~14]{parlett1998symmetric}~\cite[Sec.~5,~6]{saad2011numerical}, and randomized techniques~\cite{halko2011finding, martinsson2020randomized} can efficiently approximate the leading singular values of a general matrix, and hence the leading eigenspace of a PSD matrix. 

RR has long been known to be optimal in the sense of the min-max characterization and subspace projection~\cite[Ch.~11]{parlett1998symmetric}. Despite these, in this work we show that there are better techniques to approximate the leading eigenpairs that exploit the positive semi-definiteness, in particular, \Nystrom's method based on the eigenvalues of the \Nystrom\ approximation $\hat{A}_{Nys} = A \langle Q \rangle= AQ (Q^\top A Q)^{\dagger} (AQ)^\top$. The resolution of the apparent inconsistency between the optimality and our result is as follows: the optimality of RR does not imply that the accuracy of the approximate eigenvalues cannot be improved, particularly when the PSD property is used. In this paper, we study the accuracy of the eigenvalue approximations obtained by three methods: RR (based on $\lambda(Q^\top AQ)$), \Nystrom\ (based on $\lambda(AQ (Q^\top A Q)^{\dagger} (AQ)^\top$), and \svdmethod\ (based on $\sigma_i(AQ)$). We explain these methods in more detail in Section~\ref{sec:methods}. The goal of our paper is to theoretically establish that the quality of the approximate eigenvalues is the highest with \Nystrom, followed by \svdmethod\ and RR, in this order. We further show that \Nystrom\ method's accuracy can be better than the other methods by an arbitrarily large factor. The improvement is significant particularly when the spectrum of $A$ decays rapidly. 

We note that a related finding is reported in \cite{lazzarino2025matrix} for the purpose of extracting singular values from approximate (left and right) singular subspaces. This paper can be seen as a specialization to the case $A\succeq 0$, where the simplifications lead to the particularly attractive method of \Nystrom.

Section~\ref{sec:Background-and-Motivation-section} reviews the background information of these techniques for finding the eigenpairs. In Section~\ref{sec:Theoretical-analysis-section}, we study the quality of eigenvalue accuracy extracted by each of the three methods, and show that \Nystrom\ has the highest accuracy, followed by \svdmethod, then RR. Finally, in Section~\ref{sec:Numerical-illustrations-section}, we present numerical experiments illustrating our findings. We also briefly discuss the extraction of trailing eigenvalues, for which \Nystrom\ turns out to be the worst of the three methods. We show that this can be remedied by working with a shift-and-flipped matrix $\gamma I-A$ for $\gamma\geq \lambda_{\max}(A)$, although this requires $\lambda_{\max}(A)$ or its estimate with some extra cost, and the improvement over RR in this case tends to be small.
 
We focus on RR, \svdmethod, and \Nystrom\ in this paper, because they are single-pass methods and have the same computational complexity $O(Nk+nk^2)$. This enables us to avoid addressing the trade-off between method accuracy and complexity. 

\paragraph{Notation}
Unless specified otherwise, for any $m \times n$ matrix $B$, we denote the $i$-th largest singular value by $\sigma_i(B)$ with $\sigma_1(B) \geq \cdots \geq \sigma_{\min\{m,n\}}(B) \geq 0$. Any $n \times n$ PSD matrix $A$ has real nonnegative eigenvalues, which are equal to its singular values. We denote its eigenvalues by $\lambda_{1}(A) \geq \cdots \geq \lambda_{n}(A) \geq 0$ and use $A = U \Lambda U^\top$ to denote its eigendecomposition, where $U =  \left[u_{1}, ..., u_{n}\right]$ is orthogonal and $\Lambda = \diag(\lambda_1, ..., \lambda_n)$. Throughout the paper we use $\leadingsubspace := \image([u_1, ..., u_k])$ and $\trailingsubspace := \image([u_{n-k+1}, ..., u_n])$ to denote the invariant subspace of $A$ spanned by the leading eigenvectors and trailing eigenvectors. We use MATLAB notation for matrix indexing, where $A(i:j, :)$, $A(:, i:j)$ and $A(i:j, i:j)$ denote the matrix consisting the $i$-st to $j$-th columns of $A$, the $i$-st to $j$-th rows of $A$, and their intersection. $\left\| \cdot \right\|_{2}$ denotes the matrix spectral norm and the vector $2$-norm. We say that an $n \times k$ ($n\geq k$) matrix $Q$ is orthonormal if it has orthonormal columns such that $Q^\top Q = I_{k}$. The orthogonal complement of an orthonormal matrix $Q$ is denoted by $Q_{\perp}\in\mathbb{R}^{n\times (n-k)}$. We use $B^{\dagger}$ to denote the pseudoinverse of any $m \times n$ matrix $B$, which is defined via the 
SVD $B=U_*\Sigma_*V_*^\top = \sum_{i=1}^{r}\sigma_{i}(B) u_{*,i}  v_{*,i}^\top $, where $U_{*} = \left[u_{*,1}, ..., u_{*,n}\right]$, $V_{*} = \left[v_{*,1}, ..., v_{*,n}\right]$ and $r=\mbox{rank}(B)$, as 
\[
    B = \sum_{i=1}^{r}\sigma_{i}(B)  u_{*,i}  v_{*,i}^\top \quad \text{ then }\quad B^{\dagger} = V_{*} \diag (\sigma_{1}^{-1}, ..., \sigma_{r}^{-1}, 0, ..., 0) U_{*}^\top,
\]
where we use $*$ to avoid confusion between the left singular vector and eigenvector. The symbol $\succeq$ denotes the Loewner order of matrices: $A \succeq B$ means $A-B$ is positive semi-definite. $A\langle X \rangle$ represents the \Nystrom\ approximation of a PSD matrix $A$ with respect to a test matrix $X\in\mathbb{R}^{n\times k}$ such that $A\langle X \rangle = AX (X^\top A X)^{\dagger} X^\top A$. For simplicity, we work with real matrices; however, analogous results hold for complex matrices, possibly differing slightly in constant terms.

\section{Background}
\label{sec:Background-and-Motivation-section}
Subspace methods first approximate an invariant subspace and then extract the eigenpairs given that approximation. Our core concern is the second step and the goal of this paper is to exploit the matrix's PSD property to improve the accuracy of eigenpair extraction beyond RR. This section provides an introduction and offers a first look at the comparison of these methods.

\subsection{Projection-based methods to extract eigenpairs given a subspace}\label{sec:methods}
Here, we provide an overview of three projection-based approaches, including RR, to accomplish the extraction task.

\subsubsection{Rayleigh-Ritz (RR)}
RR, described in Algorithm~\ref{alg:Rayleigh-Ritz}, is classic and the most widely method based on projecting the matrix and computing the eigenvalue decomposition of $Q^\top A Q$.

\begin{algorithm}
	\algsetup{linenosize=\small} \small
	\renewcommand{\algorithmicrequire}{\textbf{Input:}}
	\renewcommand{\algorithmicensure}{\textbf{Output:}}
	\caption{RR}
	\begin{algorithmic}[1]
		\label{alg:Rayleigh-Ritz}
		\REQUIRE Positive semi-definite $A \in \mathbb{R}^{n \times n}$; orthonormal $Q \in \mathbb{R}^{n \times k}$.
		\ENSURE Diagonal $\hat{\Lambda} \in \mathbb{R}^{k \times k }$ and orthonormal $\hat{U} \in \mathbb{R}^{n \times k}$ that approximate the desired eigenpairs, i.e., $A\hat{U} \approx \hat{U}\hat{\Lambda}$
		\STATE Compute $Q^\top A Q$.
		\STATE Compute the eigendecomposition $Q^\top A Q = \widetilde{\Omega} \hat{\Lambda} \widetilde{\Omega}^\top$, where $\tilde{\Omega}$ is orthogonal.
		\STATE $\hat{U} = Q \widetilde{\Omega}$.
	\end{algorithmic} 
\end{algorithm}

\subsubsection{\svdmethod}
The positive semi-definiteness implies the equivalence between eigenpairs and singular pairs. In (randomized) numerical linear algebra, the one-side projected SVD (we abbreviate it as \svdmethod) technique is a common way to approximate the singular pairs (so eigenpairs) of a matrix $A$ when a subspace $\subspace$ is given~\cite{gu2015subspace, halko2011finding, martinsson2020randomized}. This allows us to obtain eigenvalue approximations for $A$ by computing the singular values of the one-side projected matrix $AQ$ (or $AQQ^\top$)
\[
    \hat{\lambda}_i = \sigma_i(AQ).
\]
The approximations to eigenvectors have two variants: once the SVD
\[
    AQ = \widetilde{U}_{*} \widetilde{\Sigma}_* \widetilde{V}_{*}^\top
\]
is computed, the approximation can be either the columns of $Q\widetilde{V}_{*}$ or $\widetilde{U}_{*}$. The former extracts eigenvectors from $\image(Q)$ as approximations, as does the RR method, whereas the latter seeks the eigenvector approximations from the subspace $\image(AQ)$. We distinguish these two variants of \svdmethod\ for singular vector approximation by denoting them as SVD-$Q\tilde{V}_{*}$ and SVD-$\tilde{U}_{*}$. Algorithm~\ref{alg:SVD} provides the pseudocode of the \svdmethod. 

\begin{algorithm}
	\algsetup{linenosize=\small} \small
	\renewcommand{\algorithmicrequire}{\textbf{Input:}}
	\renewcommand{\algorithmicensure}{\textbf{Output:}}
	\caption{\svdmethod}
	\begin{algorithmic}[1]
		\label{alg:SVD}
		\REQUIRE Positive semi-definite $A \in \mathbb{R}^{n \times n}$; orthonormal $Q \in \mathbb{R}^{n \times k}$.
		\ENSURE Diagonal $\hat{\Lambda} \in \mathbb{R}^{k \times k }$ and orthonormal $\hat{U} \in \mathbb{R}^{n \times k}$ that approximate the desired eigenpairs, i.e., $A\hat{U} \approx \hat{U}\hat{\Lambda}$
		\STATE Compute $AQ$.
		\STATE Compute the SVD $AQ = \widetilde{U}_{*} \widetilde{\Sigma}_* \widetilde{V}_{*}^\top$.
		\STATE $\hat{\Lambda} = \widetilde{\Sigma}_*$, and either $\hat{U} = Q \widetilde{V}_{*}$ or $\hat{U} = \widetilde{U}_{*}$.
	\end{algorithmic} 
\end{algorithm}

\subsubsection{\Nystrom}
Originally introduced in machine learning~\cite{williams2000using}, and now a popular tool in randomized numerical linear algebra, \Nystrom\ is an attractive method to find a low-rank approximation for a PSD matrix and thus its eigenpairs \cite{halko2011finding, martinsson2020randomized, nakatsukasa2020sharp, tropp2023randomized}. It seeks eigenvalue approximations by
\[
    \hat{\lambda}_i = \lambda_i \left(A \langle Q \rangle\right) = \lambda_i\left(AQ (Q^\top A Q)^{\dagger} (AQ)^\top\right)
\]
and the corresponding eigenvectors of $A \langle Q \rangle$ are eigenvector approximations. The pseudocode of the \Nystrom\ method appears in Algorithm~\ref{alg:Nystrom}. For efficiency, it implicitly generates the \Nystrom\ approximation by $(AQL^{\dagger})(AQL^{\dagger})^\top$ with the Cholesky factorization $Q^\top A Q = L^\top L$. The eigenpair approximations are obtained by computing the singular pairs of $\widetilde{R}(Q^\top A Q)^{\dagger}\widetilde{R}^\top$, where $AQ = \widetilde{Q} \widetilde{R}$ is a thin QR factorization. Specifically, we solve $ZL = \widetilde{R}$ \footnote{We will never need to explicitly form the pseudoinverse, which is computationally expensive when the matrix size is large.} to implicitly generate $\tilde{R} L^{\dagger}$, factorize
\[
    \widetilde{R} C^{\dagger} = \widehat{U}_{*} \widehat{\Sigma}_* \widehat{V}_{*}^\top,
\]
and obtain eigenpair approximations 
\[
    \hat{\Lambda} = \widehat{\Sigma}^2, \hat{U} = \widetilde{Q} \widehat{U}_{*}.
\]
In finite-precision computation, the implementation of \Nystrom\ may suffer from numerical instability. This is due to the ill-conditioning of the core matrix $Q^\top A Q$, which makes the computation of its pseudoinverse highly sensitive to round-off errors. To our knowledge, there are two main classes of proposed techniques to overcome this instability. One is to introduce an $\epsilon$-truncated Cholesky factorization for computing the pseudoinverse \cite{bucci2025numerical}, which is shown to be stable when $S$ is a subsampling matrix that gives a local-maximum volume submatrix~\cite{damle2025estimating}, and observed to work well in practice more generally.  Another approach~\cite{tropp2017fixed} is to incorporate a small shift to ensure the smallest singular value of the core matrix is larger than the unit round-off. In our paper we choose the implementation of the former one, as it is more efficient and stability was not seen to cause issues (even without the $\epsilon$ truncation).

\begin{remark}
    The \Nystrom\ approximation $A \langle Q \rangle$ only depends on the range of the sketching matrix $Q$. That is, for any non-singular $M\in\mathbb{R}^{k\times k}$, we have
    \[
        AQ (Q^\top AQ)^{\dagger} (AQ)^\top = A(QM) \left((QM)^\top A (QM)\right)^{\dagger} \left(A (QM)\right)^\top.
    \]
\end{remark}

\begin{algorithm}
    \algsetup{linenosize=\small} \small
    \renewcommand{\algorithmicrequire}{\textbf{Input:}}
    \renewcommand{\algorithmicensure}{\textbf{Output:}}
    \caption{\Nystrom\ method}
    \begin{algorithmic}[1]
    \label{alg:Nystrom}
    \REQUIRE Positive semi-definite $A \in \mathbb{R}^{n \times n}$; orthonormal $Q \in \mathbb{R}^{n \times k}$; a truncation parameter $\epsilon > 0$
    \ENSURE Diagonal $\hat{\Lambda} \in \mathbb{R}^{k \times k }$ and orthonormal $\hat{U} \in \mathbb{R}^{n \times k}$ that approximate the desired eigenpairs, i.e., $A\hat{U} \approx \hat{U}\hat{\Lambda}$
    \STATE Compute $Y = AQ$ and its QR factorization $Y = \widetilde{Q}\widetilde{R}$.
    \STATE Compute $W = Q^\top Y$.
    \STATE Compute $\epsilon$-truncated Cholesky factorization $W = L_{\epsilon}^\top L_{\epsilon}$ (in practice, standard Cholesky $W = L^\top L$ suffices).
    \STATE Solve the overdetermined linear systems $Z L_{\epsilon} = \widetilde{R}$ for $Z$ via back substitution.
    \STATE Compute the SVD to $Z = \widehat{U}_{*} \widehat{\Sigma}_* \widehat{V}_{*}^\top$.
    \STATE $\hat{\Lambda} = \widehat{\Sigma}^2$, $\hat{U} = \widetilde{Q}\widehat{U}_{*}$.
    \end{algorithmic} 
\end{algorithm}

\subsection{Connection among RR, \svdmethod and \Nystrom}
\label{sec:Compare-approximate-eigenvalue-RR-vs-SVD-vs-Nystrom}
The main difference among these methods lies in the subspace where they extract the eigenvectors as approximations. RR and SVD-$Q\widetilde{V}$ extract the eigenvectors from the given subspace $\image(Q)$, whereas the SVD-$\widetilde{U}_{*}$ and \Nystrom\ seek eigenvectors from $\image(AQ)$. Let us show that the RR method can be viewed as an \svdmethod\ method applied to a matrix other than the original one and vice versa. Define the square root of a PSD matrix $A = U \Sigma U^\top$ via the SVD
\[
    A^{1/2} := U \Lambda^{1/2} U^\top,
\]
where $\Lambda^{1/2} = \text{diag} (\lambda_1(A)^{1/2}, ..., \lambda_n(A)^{1/2})$. From the \svdmethod\ perspective, the RR eigenvalue approximations for $A$ can be viewed as
\[
    \hat{\lambda}_{i}^{(\text{RR})}(A) = \lambda_{i}(Q^\top A Q) = \left(\sigma_{i}(A^{1/2}Q)\right)^2 = \left(\hat{\sigma}_{i}^{(\text{SVD})}(A^{1/2})\right)^2,
\]
where $i = 1, ..., \dim(\image(Q))$. That is, the RR eigenvalue estimates are equal to those of (the square of) SVD-extract applied to $A^{1/2}$ with the same $Q$. Similarly, from the RR perspective, the \svdmethod\ approximation to the singular values of $A$ can be viewed as the square root of RR eigenvalue approximations for $A^2$, that is,
\[
    \hat{\sigma}_{i}^{(\text{SVD})}(A) = \sigma_{i}(AQ) = \sqrt{\lambda_{i}(Q^\top A^2 Q)} = \sqrt{\hat{\lambda}_{i}^{(\text{RR})}(A^2)},
\]
where $i = 1, ..., \dim(\image(Q))$. 

Also, when $\subspace$ is an approximation to the invariant subspace $\leadingsubspace$ of $A$, we can view the RR and \Nystrom\ methods as seeking the eigenpairs of different projection-based sketches to $A$ as approximations. In detail, the RR and \Nystrom\ methods seek the leading eigenpairs of
\[
    \hat{A}_{\text{RR}}^{(k)} = Q Q^\top A Q^\top Q = \left(\mathcal{P}_{\image(Q)} A^{1/2}\right) \left(A^{1/2} \mathcal{P}_{\image(Q)}\right)
\]
and
\[
    \hat{A}_{\text{Nys}}^{(k)} = (AQ) (Q^\top A Q)^{\dagger} (AQ)^\top = \left(A^{1/2} \mathcal{P}_{\image(A^{1/2}Q)}\right) \left(\mathcal{P}_{\image(A^{1/2}Q)} A^{1/2}\right)
\]
as approximations, where $\mathcal{P}_{\image(Q)}$ and $\mathcal{P}_{\image(A^{1/2}Q)}$ is the orthogonal projector onto the subspace $\image(Q)$ and $\image(A^{1/2}Q)$, respectively \cite{gittens2016revisiting}. The latter subspace benefits from one extra step of the block power method, so it more accurately captures the invariant subspace $\leadingsubspace$ than the former; we make this precise in Section~\ref{subsec:Eigenvector-analysis-subsection}.

\subsection{Computational cost}
Computational cost is an important aspect of performance. Here, we measure the cost in terms of the floating-point operations. Table~\ref{table:method-complexity-table} summarizes the complexity of these three methods, including the constants. All methods are $O(Nk+nk^2)$, thereby allowing us to largely avoid the trade-off between method accuracy and complexity. 

\begin{table}[ht]
    \centering
    \caption{The complexity of different methods, where $k = \dim(\subspace)$ and $N$ is the cost of performing a matrix-vector multiplication with $A$.
    }
    \label{table:method-complexity-table}
    \begin{tabular}{c|c}
        \hline
        & Method complexity \\
        \hline
        Rayleigh-Ritz & $Nk + 2nk^2$ \\
        \svdmethod & $Nk + 4nk^2$ \\
        \Nystrom & $Nk+ 4nk^2$ \\
        \hline
    \end{tabular}
\end{table}

While the cost of RR is slightly lower than \Nystrom, we will see that the latter can give significantly better approximations to the eigenpairs, making it well worth the extra cost.

\section{Theoretical analysis}
\label{sec:Theoretical-analysis-section}
In this section, we will show that  \Nystrom\ has the highest accuracy for approximating the leading eigenvalues, while the RR method is the best for approximating the trailing ones. We will also demonstrate the higher-order accuracy of the \Nystrom\ method in leading eigenvalue approximation, where this improvement can be significant when the spectrum of $A$ decays rapidly.

\subsection{\Nystrom\ works better for leading eigenvalue approximation}
\label{subsec:Leading-eigenvalue-subsection}
We first provide a qualitative comparison between the three methods.
\begin{theorem}
    \label{thm:Nystom-vs-SVD-vs-RR-Theorem}
    Suppose $A$ is an $n \times n$ PSD matrix and $Q\in\mathbb{R}^{n\times k}$ is an orthonormal basis of the subspace $\subspace$ which approximates the invariant subspace $\leadingsubspace$ of $A$. Then, the eigenvalue approximations from the RR, \svdmethod, and \Nystrom\ methods satisfy  
    \begin{equation}
        \label{eq:Nystrom-vs-SVD-vs-RR}
        \lambda_i(A) \geq \lambda_i(A\langle Q \rangle) \geq \sigma_i(AQ) \geq \lambda_i(Q^\top A Q) \quad \forall i = 1,\ldots, k.
    \end{equation}
\end{theorem}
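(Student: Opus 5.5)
The plan is to prove the three inequalities in \eqref{eq:Nystrom-vs-SVD-vs-RR} separately. Each one reduces to a Loewner-order comparison or to a Cauchy--Schwarz type argument, using the factorizations through $A^{1/2}$ recorded in Section~\ref{sec:Compare-approximate-eigenvalue-RR-vs-SVD-vs-Nystrom}. Write $B := A^{1/2}Q\in\realR^{n\times k}$ and let $\projector_B$ denote the orthogonal projector onto $\image(B)$. Then
\[
Q^\top A Q = B^\top B,\qquad \sigma_i(AQ)^2=\lambda_i(Q^\top A^2Q)=\lambda_i(B^\top A B),\qquad A\langle Q\rangle = A^{1/2}\projector_B A^{1/2}.
\]

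\textbf{Step 1: $\lambda_i(A)\ge\lambda_i(A\langle Q\rangle)$.} Since $\projector_B\preceq I$, congruence by $A^{1/2}$ gives $A\langle Q\rangle = A^{1/2}\projector_B A^{1/2}\preceq A$. Weyl's monotonicity theorem then yields the inequality for every $i$.

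\textbf{Step 2: $\sigma_i(AQ)\ge\lambda_i(Q^\top AQ)$.} Write $C:=Q^\top AQ=B^\top B$. We have $Q^\top A^2Q=(AQ)^\top(AQ)\succeq (AQ)^\top QQ^\top(AQ)=C^2$, because $QQ^\top\preceq I$. Hence $\sigma_i(AQ)^2=\lambda_i(Q^\top A^2Q)\ge\lambda_i(C^2)=\lambda_i(C)^2$. Taking square roots, which is legitimate since all quantities are nonnegative, gives the claim.

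\textbf{Step 3: $\lambda_i(A\langle Q\rangle)\ge\sigma_i(AQ)$.} This is the main step. The nonzero eigenvalues of $A\langle Q\rangle=(AQ)C^\dagger(AQ)^\top$ coincide with those of the $k\times k$ matrix $C^{\dagger/2}(Q^\top A^2Q)C^{\dagger/2}$, where $C^{\dagger/2}:=(C^\dagger)^{1/2}$. It is cleaner, however, to compare with $\sigma_i(AQ)^2$ via squares.

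The key observation is that $A\langle Q\rangle$ is a PSD matrix satisfying $A\langle Q\rangle Q=AQ\,C^\dagger C=AQ$. To justify this, note that $\ker C=\ker B$ and $B$ annihilates exactly that kernel, so $AQ=A^{1/2}B$ also vanishes on $\ker C$; therefore $AQ\,C^\dagger C=AQ$. Set $N:=A\langle Q\rangle$. Then
\[
\sigma_i(AQ)^2=\lambda_i(Q^\top N^2Q)\le \lambda_i(N^2)=\lambda_i(N)^2,
\]
where the inequality is Cauchy interlacing for the compression of $N^2$ by the orthonormal $Q$. Taking square roots gives Step 3.

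The main obstacle is the pseudoinverse/kernel bookkeeping in the identity $NQ=AQ$ when $Q^\top AQ$ is singular; once that identity is in place, everything else is Weyl monotonicity and Cauchy interlacing. Chaining Steps 1--3 gives \eqref{eq:Nystrom-vs-SVD-vs-RR}.
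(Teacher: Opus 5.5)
Your proof is correct and takes essentially the paper's route. Step 1 is the paper's $A-A\langle Q\rangle\succeq 0$ plus Weyl, Step 2 is $\sigma_i(Q^\top AQ)\le\sigma_i(AQ)\|Q^\top\|_2$ rephrased as a Loewner inequality, and Step 3 (the identity $A\langle Q\rangle Q=AQ$ followed by interlacing) is the coordinate-free form of the paper's observation that $AX$ is the first block column of $A\langle X\rangle$, so its singular values are dominated by those of $A\langle X\rangle$. Your kernel argument for $AQ\,C^\dagger C=AQ$ also makes explicit the range condition $A_{11}A_{11}^\dagger A_{12}=A_{12}$, which the paper's block formula uses without comment.
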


\begin{proof}
    The first inequality follows immediately from $A - A\langle Q \rangle \succeq 0$, which is a consequence of the fact that the Schur complement of an PSD matrix is PSD (more details can be found for example in \cite[Section 4.6]{tropp2023randomized}). To prove the inequality between the RR and \svdmethod\, we have
    \[
        \lambda_{i} (Q^\top A Q) = \sigma_{i} (Q^\top A Q) \leq \sigma_{i} (AQ) \left\|Q^\top\right\|_2 = \sigma_{i} (AQ).
    \]
    
    We now prove the inequality between the \Nystrom\ and \svdmethod. Since $\subspace$ is an $\dim(\subspace)$-dimensional subspace of $\mathbb{R}^{n}$, by applying an orthogonal change of basis with respect to $Q_F := [Q,Q_\perp]$, we can assume
    \[
        \image(Q) =
        \image\left(
        \begin{bmatrix}
            I_{\dim(\subspace)} \\
            0
        \end{bmatrix}
        \right)
        \Rightarrow
        Q = 
        \begin{bmatrix}
            I_{\dim(\subspace)} \\
            0
        \end{bmatrix}
        M,
    \]
    where $M$ is $k\times k$ orthogonal and we denote $X := \begin{bmatrix}
        I_{\dim(\subspace)} & 0
    \end{bmatrix}^\top$ for convenience. Since $A \langle Q \rangle$ only depends on the range of $Q$, we have 
    \[
        \begin{aligned}
            & \lambda_{i} (Q^\top AQ) = \lambda_{i} (M^\top X^\top A X M) = \lambda_{i} (X^\top A X^\top), \\
            & \sigma_{i} (AQ) = \sigma_{i} (AXM) = \sigma_{i} (AX), \\
            & \lambda_{i} \left(A \langle Q \rangle\right) = \lambda_{i} \left(A(XM) \left((XM)^\top A (XM)\right)^{\dagger} (XM)^\top A^\top\right)  = \lambda_{i} (A \langle X \rangle).
        \end{aligned}
    \]
    Then, we can assume without loss of generality that
    \[
        Q = X
        =
        \begin{bmatrix}
        I_{\dim(\subspace)} \\
        0 \\
        \end{bmatrix}.
    \]
    To prove $\lambda_i(A\langle Q \rangle) \geq \lambda_i(A\langle Q \rangle)$, we consider the matrices in block form:
    \[
    \begin{gathered}
        A =
        \begin{bmatrix}
            A_{11} & A_{12} \\
            A_{12}^\top & A_{22}
        \end{bmatrix},
        \qquad
        AX =
        \begin{bmatrix}
            A_{11} \\
            A_{12}^\top
        \end{bmatrix},
        \\[0.5em]
        A \langle X \rangle
        =
        AX (X^\top A X)^{\dagger} X^\top A^\top
        =
        \begin{bmatrix}
            A_{11} & A_{12} \\
            A_{12}^\top & A_{12}^\top A_{11}^{\dagger} A_{12}
        \end{bmatrix}.
    \end{gathered}
    \]
    Thus, we have
    \[
        \begin{aligned}
            \lambda_i(A\langle X \rangle)
            =
            \lambda_{i} \left(
                        \begin{bmatrix}
                        A_{11} & A_{12} \\
                        A_{12}^\top & A_{12}^\top A_{11}^{\dagger} A_{12} \\
                        \end{bmatrix}
                        \right)
            & =
            \sigma_{i} \left(
                        \begin{bmatrix}
                        A_{11} & A_{12} \\
                        A_{12}^\top & A_{12}^\top A_{11}^{\dagger} A_{12} \\
                        \end{bmatrix}
                        \right)
            \\
            & \geq
            \sigma_{i} \left(
                       \begin{bmatrix}
                       A_{11} \\
                       A_{12}^\top \\
                       \end{bmatrix}
                       \right)
            =
            \sigma_i(AX).
        \end{aligned}
    \]
    The second equality is due to $A \langle X \rangle \succeq 0$. This completes the proof.
\end{proof}

\begin{remark}
    The inequalities between the eigenvalue approximations do not require $\subspace$ to be an invariant subspace approximation, i.e., in the same settings as Theorem~\ref{thm:Nystom-vs-SVD-vs-RR-Theorem} except that $\mathcal{Q}$ is a low-dimensional subspace, we have
    \begin{equation}
        \label{remark:Nystrom-vs-SVD-vs-RR-general-form}
        \lambda_i(A\langle Q \rangle) \geq \sigma_i(AQ) \geq \lambda_i(Q^\top A Q) \quad \forall i = 1, ... ,\dim(\subspace).
    \end{equation}
\end{remark}
Theorem~\ref{thm:Nystom-vs-SVD-vs-RR-Theorem} shows that the leading eigenvalue approximations
\[
\hat{\lambda}_{i}^{\mathrm{RR}}
    := \lambda_i(Q^\top A Q), 
\qquad
\hat{\lambda}_{i}^{\mathrm{SVD}}
    := \sigma_i(AQ),
\qquad
\hat{\lambda}_{i}^{\mathrm{Nys}}
    := \lambda_i(A\langle Q\rangle)
\]
satisfy the following bounds.
\begin{equation}
    \label{eq:largest-eigenvalue-accuracy-relation}
    \left| \lambda_{i}(A) - \hat{\lambda}_{i}^{\text{(RR)}}\right| 
    \geq
    \left| \lambda_{i}(A) - \hat{\lambda}_{i}^{\text{(SVD)}}\right|
    \geq
    \left| \lambda_{i}(A) - \hat{\lambda}_{i}^{\text{(Nys)}}\right|,
\end{equation}
where $i = 1, ..., \dim(\leadingsubspace)$. This implies that exploiting the PSD property does help the \svdmethod\ and \Nystrom\ improve the accuracy of approximating the leading eigenvalues.

\subsection{High-order accuracy of \Nystrom\ in leading eigenvalue approximations}
\label{subsec:Analysis-High-order-accuracy-Nystrom}
The results above show qualitatively that \Nystrom\ and \svdmethod\ are better than RR in approximating the leading eigenvalues. Here we derive a quantitative result, showing that \Nystrom\ has a higher-order accuracy than \svdmethod\, and RR, and this accuracy improvement can be arbitrarily large, especially in the leading eigenvalue approximations.

\begin{theorem}
    \label{thm:Higher-order-accuracy-Nystrom-theorem}
    Suppose that $A$ is an $n \times n$ PSD matrix and that $\subspace$ is an $k$-dimensional $\epsilon$-approximation, with $\epsilon < 1$, to the dominant invariant subspace $\leadingsubspace$ of $A$. Let $Q$ be an orthonormal basis matrix for $\subspace$ such that\footnote{$\subspace$ is an $\epsilon$-approximation to $\leadingsubspace$ in the principal-angle sense
    \[
    \|\sin \Theta(\image(U_1), \image(Q))\|_2 = \|U_2^\top Q\|_2 = \epsilon \|N\|_2 \leq \epsilon,
    \]
    which requires $\|N\|_2 \leq 1$. $\|M\|_2$ controls basis alignment inside $\image(U_1)$ since
    \[
    U_1^\top Q = I_k + \epsilon^2 M.
    \]
    If we restrict $\|M\|_2 = O(1)$, the basis $Q$ is aligned with $U_1$ up to a second-order perturbation inside $\image(U_1)$ such that
    \[
    \|U_1^\top Q - I_k\|_2 = O(\epsilon^2).
    \]
    It is convenient to normalize it by $\|M\|_2 \leq 1$, which gives the explicit estimate
    \[
    \|U_1^\top Q - I_k\|_2 \leq \epsilon^2.
    \]
    }
    \begin{equation}
        \label{eq:eps-approximation-Q-formula}
        Q = U_1 + \epsilon^2 U_1 M + \epsilon U_2 N,
    \end{equation}
    where
    \[
        U_1 = [u_1,\ldots,u_k], 
        \qquad
        U_2 = [u_{k+1},\ldots,u_n],
    \]
    and
    \[
        M \in \realR^{k \times k}, 
        \qquad
        N \in \realR^{(n-k) \times k},
        \qquad
        \|M\|_2 \leq 1, 
        \qquad
        \|N\|_2 \leq 1 .
    \]

    Define 
        \[
\alpha_i :=   1-\frac{  3\lambda_{k+1}(A) + 3 \epsilon^2 (\lambda_1(A) - \lambda_{k+1}(A))}{\lambda_i(A)} . 
    \]
    For each $i=1,\ldots, k$, if 
    $\alpha_i>0$, we have
    \begin{equation}
        \label{eq:Upper-bound-RR}
        \left|\lambda_{i}(A) - \sigma_i(AQ)\right| \leq \left|\lambda_{i}(A) - \lambda_i(Q^\top A Q)\right| \leq C_{RR} \cdot \epsilon^2 \lambda_{1}(A),
    \end{equation}
    \begin{equation}
        \label{eq:Upper-bound-Nys}
        \left| \lambda_{i}(A) - \lambda_{i}(A \langle Q \rangle) \right| \leq C_{Nys,i} \cdot \left(\epsilon^2\lambda_{k+1}(A) + \epsilon^4 (\lambda_1(A) - \lambda_{k+1}(A))\right),
    \end{equation}
    where 
    \[
        C_{RR} := 2 + \epsilon^2 + \lambda_{k+1}(A) / \lambda_1(A) \leq 4
    \]
    and
    \[
        C_{Nys, i} := \left(\lambda_1(A) (1+ \epsilon^2) + \lambda_{k+1}(A) \left(1 + \epsilon^2 \left(1 / \left(1 + \sqrt{1-\epsilon^2}\right)\right)\right) / \alpha_{i} \lambda_i(A)\right)^2.
    \]
\end{theorem}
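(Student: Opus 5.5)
The first inequality in \eqref{eq:Upper-bound-RR} follows at once from Theorem~\ref{thm:Nystom-vs-SVD-vs-RR-Theorem}: all approximations lie below $\lambda_i(A)$ and $\sigma_i(AQ)\ge\lambda_i(Q^\top AQ)$. It therefore remains to bound the RR error and the \Nystrom\ error. In both cases we work in the eigenbasis, writing $A=U_1\Lambda_1U_1^\top+U_2\Lambda_2U_2^\top$ and using \eqref{eq:eps-approximation-Q-formula}. The orthonormality $Q^\top Q=I_k$ gives the identity
\[
(I+\epsilon^2M)^\top(I+\epsilon^2M)=I-\epsilon^2N^\top N ,
\]
so $G:=I+\epsilon^2M$ satisfies $\sigma_{\min}(G)\ge\sqrt{1-\epsilon^2}$ and $\|G\|_2\le 1$. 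This identity is what produces the factor $1/(1+\sqrt{1-\epsilon^2})$ in $C_{Nys,i}$, since $1-\sqrt{1-\epsilon^2}=\epsilon^2/(1+\sqrt{1-\epsilon^2})$.

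For RR, expand
\[
Q^\top AQ=G^\top\Lambda_1G+\epsilon^2N^\top\Lambda_2N .
\]
The second term is PSD with norm at most $\epsilon^2\lambda_{k+1}$. For the first term, write $G^\top\Lambda_1G=\Lambda_1+\epsilon^2(M^\top\Lambda_1+\Lambda_1M)+\epsilon^4M^\top\Lambda_1M$. Weyl's inequality then gives
\[
|\lambda_i(Q^\top AQ)-\lambda_i|\le 2\epsilon^2\lambda_1+\epsilon^4\lambda_1+\epsilon^2\lambda_{k+1},
\]
which is exactly $C_{RR}\,\epsilon^2\lambda_1$.

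For \Nystrom, we use the representation $A\langle Q\rangle=A^{1/2}\mathcal P_{A^{1/2}Q}A^{1/2}$. Its nonzero eigenvalues equal those of $\mathcal P_{A^{1/2}Q}A\,\mathcal P_{A^{1/2}Q}$, i.e.\ the RR values of $A$ on the subspace $\image(A^{1/2}Q)$. Choose the basis
\[
A^{1/2}Q\,G^{-1}\Lambda_1^{-1/2}=U_1+\epsilon U_2\Lambda_2^{1/2}NG^{-1}\Lambda_1^{-1/2}=:U_1+U_2E .
\]
This basis has no $U_1$-internal perturbation, and its off-block satisfies
\[
\|E\|_2\le \epsilon\,\lambda_{k+1}^{1/2}\lambda_k^{-1/2}/\sqrt{1-\epsilon^2}.
\]
Since $(I+E^\top E)^{-1/2}$ orthonormalizes this basis, the \Nystrom\ eigenvalues are the eigenvalues of
\[
(I+E^\top E)^{-1/2}(\Lambda_1+E^\top\Lambda_2E)(I+E^\top E)^{-1/2}.
\]
This matrix differs from $\Lambda_1$ by terms of size $\|E\|^2\lambda_1$, which are second order in the new angle, and the new angle is itself of order $\epsilon\sqrt{\lambda_{k+1}/\lambda_k}$. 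Comparing $\lambda_i$ of $\Lambda_1-\tfrac12(E^\top E\Lambda_1+\Lambda_1E^\top E)+E^\top\Lambda_2E$ with $\lambda_i$ via Weyl's inequality gives an error of order $\epsilon^2\lambda_{k+1}\lambda_1/\lambda_i$. This is the mechanism behind the squared constant and the $\epsilon^2\lambda_{k+1}$ term.

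The main obstacle is the index-dependent constant $C_{Nys,i}$ and the hypothesis $\alpha_i>0$. These indicate that the authors do not use $\lambda_k$ in the denominator; instead they argue per eigenvalue $i$, restricting to the leading $i$-dimensional part. The $\epsilon^4(\lambda_1-\lambda_{k+1})$ term arises from not exploiting $\Lambda_2\preceq\lambda_{k+1}I$ fully against the $\epsilon^2M$ part. I would therefore handle the bound as follows.
\begin{enumerate}
\item Lower-bound $\lambda_i(A\langle Q\rangle)$ via the min-max principle, using the test space spanned by the images of $u_1,\dots,u_i$, namely $A^{1/2}Q G^{-1}e_j$ for $j\le i$.
\item Estimate the Rayleigh quotients there: the numerator is controlled by the expansion above, and the denominator by $\lambda_i$ minus contamination of size $3\lambda_{k+1}+3\epsilon^2(\lambda_1-\lambda_{k+1})$. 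This contamination comes from bounding $\|Q^\top AQ-\Lambda_1\|$ by $\lambda_{k+1}+\epsilon^2(\lambda_1-\lambda_{k+1})$-type quantities with a factor $3$ from the triangle inequality.
\item Conclude that the Rayleigh quotient is at least $\lambda_i-C\cdot(\text{numerator error})/(\alpha_i\lambda_i)$, and square the resulting ratio to obtain $C_{Nys,i}$.
\end{enumerate}
The delicate part is tracking the constants so that exactly $\alpha_i$ and the stated $C_{Nys,i}$ emerge. I would first obtain a clean bound of the form
\[
\lambda_i-\lambda_i(A\langle Q\rangle)\le\frac{\bigl\|\text{off-diagonal coupling}\bigr\|^2}{\alpha_i\lambda_i},
\]
and then fit the constants.
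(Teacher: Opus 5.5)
The gap is the \Nystrom\ bound \eqref{eq:Upper-bound-Nys}, which you never actually prove. Your treatment of the first inequality in \eqref{eq:Upper-bound-RR} and of the RR bound is the paper's argument: Theorem~\ref{thm:Nystom-vs-SVD-vs-RR-Theorem}, then Weyl applied to $G^\top\Lambda_1G+\epsilon^2N^\top\Lambda_2N-\Lambda_1$.

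\textbf{Why the Weyl step fails.} The one \Nystrom\ estimate you carry out is Weyl's inequality for $H=(I+E^\top E)^{-1/2}(\Lambda_1+E^\top\Lambda_2E)(I+E^\top E)^{-1/2}$. Weyl is index-blind: it charges every eigenvalue the full $\|H-\Lambda_1\|_2$. That norm is \emph{first} order in the off-diagonal entries of $\tfrac12(E^\top E\Lambda_1+\Lambda_1E^\top E)$, and your $E$ carries $\Lambda_1^{-1/2}$. Your own estimate $\|E\|_2^2\lambda_1$ therefore gives order $\epsilon^2\lambda_1\lambda_{k+1}/\lambda_k$ (and only to leading order), with $\lambda_k$, not $\lambda_i$ as you claim. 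For $\lambda_k=\lambda_{k+1}$ this is just the RR rate $\epsilon^2\lambda_1$. In that case (with $k\ge 2$) even the $(1,k)$ entry of $H-\Lambda_1$ can be of order $\epsilon^2\sqrt{\lambda_1\lambda_{k+1}}$. When $\epsilon^2\ll\sqrt{\lambda_{k+1}/\lambda_1}\ll 1$, that is far above the $i=1$ case of \eqref{eq:Upper-bound-Nys}, which is $O(\epsilon^2\lambda_{k+1}+\epsilon^4\lambda_1)$. So no Weyl-type argument can reach the stated bound.

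\textbf{Why steps 1--3 do not close it.} These steps are where the proof would have to happen, but ``fit the constants'' is not an argument. The mechanisms you guess are also not the real ones: no Rayleigh-quotient denominator picks up $3\lambda_{k+1}+3\epsilon^2(\lambda_1-\lambda_{k+1})$ from $\|Q^\top AQ-\Lambda_1\|_2$.

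\textbf{Where $\alpha_i$ actually comes from.} In the paper, the Schur-complement identity $Q_F^\top(A-A\langle Q\rangle)Q_F=\diag(0,F)$ shows that the \Nystrom\ error is a perturbation confined to the $(2,2)$ block of the nearly block-diagonal matrix $Q_F^\top AQ_F$. The quadratic bound \eqref{eq:Nystrom-perturbation-eq} then applies, with coupling $\|Q_\perp^\top AQ\|_2=O(\epsilon\lambda_1)$. Both $\|F\|_2$ and $\|Q_\perp^\top AQ_\perp\|_2$ are at most $\lambda_{k+1}+\epsilon^2(\lambda_1-\lambda_{k+1})$, because $Q_\perp$ has an $O(\epsilon)$ component in $\image(U_1)$. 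Hence the gap minus $2\|F\|_2$ is at least $\alpha_i\lambda_i$, which is where the factor $3$ comes from. Multiplying $\|F\|_2$ by $\|Q_\perp^\top AQ\|_2^2$ produces the $\epsilon^4(\lambda_1-\lambda_{k+1})$ term.

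\textbf{How your own framework closes the gap.} If you simply execute step 1 rather than aiming for $\alpha_i$, you get a stronger result. Assume $\lambda_k>0$; if $\lambda_k=0$, then $\image(A^{1/2}Q)=\image(A^{1/2})$ and $A\langle Q\rangle=A$. Set $J_i:=I_k(:,1:i)$, $E_i:=EJ_i$, and take test vectors $x=\begin{bmatrix}J_ic\\E_ic\end{bmatrix}\in\image(A^{1/2}Q)$. Then $x^\top Ax\ge\lambda_i\|c\|_2^2$ and $x^\top x\le(1+\|E_i\|_2^2)\|c\|_2^2$. Moreover $\|E_i\|_2\le\epsilon\sqrt{\lambda_{k+1}/\lambda_i}/\sqrt{1-\epsilon^2}$, because $\|\Lambda_1^{-1/2}J_i\|_2=\lambda_i^{-1/2}$. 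Courant--Fischer on $\image(A^{1/2}Q)$, together with Theorem~\ref{thm:Nystom-vs-SVD-vs-RR-Theorem}, gives
\[
0\le\lambda_i-\lambda_i(A\langle Q\rangle)\le\lambda_i\|E_i\|_2^2\le\frac{\epsilon^2\lambda_{k+1}}{1-\epsilon^2},\qquad i=1,\ldots,k.
\]
This implies \eqref{eq:Upper-bound-Nys}. Indeed, $\alpha_i>0$ forces $3\epsilon^2\lambda_1<\lambda_i\le\lambda_1$, i.e.\ $\epsilon^2<1/3$. Reading $C_{Nys,i}$ as in the paper's proof (the whole numerator over $\alpha_i\lambda_i$), one then has $C_{Nys,i}\ge(1+\epsilon^2)^2\ge 1/(1-\epsilon^2)$.

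\textbf{Comparison with the paper's route.} Your completed route needs no block perturbation theorem and no gap condition. It gives an $i$-uniform $O(\epsilon^2\lambda_{k+1})$ error, without the $(\lambda_1/\alpha_i\lambda_i)^2$ factor. What it does not display is the paper's structural picture: the \Nystrom\ error as a Schur-complement perturbation that enters only quadratically through the coupling block.
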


\begin{proof}
Let us assume without losing generality that the PSD matrix $A$ is diagonal such that
\[
    A
    =
    \diag (\lambda_{1}, ..., \lambda_{n})
    =
    \begin{bmatrix}
        \Lambda_{1} & 0 \\
        0 & \Lambda_{2} \\
    \end{bmatrix},
\]
where $\Lambda_{1} = \diag(\lambda_1, ..., \lambda_k)$ is an $k \times k$ diagonal matrix. Unless otherwise specified, in the following proofs, we use $\lambda_i$ to denote the eigenvalues $\lambda_i(A)$ of $A$ and $\lambda_i(B)$ to denote the eigenvalues of any matrix $B$ other than $A$. Since the orthogonal basis $Q$ of $\subspace$ is an $\epsilon$ approximation to the leading invariant subspace of $A$, under the orthogonal change of basis we have \footnote{The first block of $Q$ after changing of basis should be taken to be $I + O(\epsilon^2)$ instead of $I + O(\epsilon)$, since we need $Q^\top Q = I$. More precisely: it could be $I+S$ for a skew-symmetric matrix $S$ with $\|S\|=O(\epsilon)$; one can then right-multiply a $k\times k$ orthogonal matrix $I-S+O(\epsilon^2)$ to obtain another basis $Q$ whose first block is $I + O(\epsilon^2)$. Finally, recall that the output of RR, \svdmethod, and \Nystrom\ are the same as long as the span of $Q$ is the same.
}
\begin{equation}
    \label{eq:transformed-Q-block-form}
    Q = U^T (U_1 + \epsilon^2 U_1 M + \epsilon U_2 N) =
    \begin{bmatrix}
        I_k + \epsilon^2 M \\
        \epsilon N \\
    \end{bmatrix}.
\end{equation}
In this case, we choose the basis of the orthogonal complement $Q_{\perp}$ of $Q$ so that it is in the form of
\[
    Q_{\perp} =
    \begin{bmatrix}
        \epsilon \widetilde{M} \\
        I_{n-k} + \epsilon^2 \widetilde{N}
    \end{bmatrix}
\]
with $\|\widetilde{M}\|_2 \leq 1$ and $\|\widetilde{N}\|_2 \leq 1 / (1 + \sqrt{1 - \epsilon^2})$ \footnote{Here we use the Cosine-Sine decomposition \cite{stewart1977perturbation} to fix the orthogonal complement and derive the upper bound for $\|\widetilde{N}\|_2$. There exists orthogonal matrices $W \in \realR^{k \times k}, Z \in \realR^{(n-k) \times (n-k)}, Y \in \realR^{k \times k}$ and diagonal matrices $\cos \Theta = \diag(\cos \theta_1, ..., \cos \theta_k), \sin \Theta = \diag(\sin \theta_1, ..., \sin \theta_k) \in \realR^{k \times k}$ with $\cos^2 \Theta + \sin^2 \Theta = I_k$ such that
\[
    \begin{bmatrix}
        Q & Q_{\perp}
    \end{bmatrix}
    =
    \begin{bmatrix}
        W & 0 \\
        0 & Z
    \end{bmatrix}
    \begin{bmatrix}
        \cos \Theta & -\sin \Theta & 0 \\
        \sin \Theta & \cos \Theta & 0 \\
        0 & 0 & I_{n-2k}
    \end{bmatrix}
    \begin{bmatrix}
        Y & 0 \\
        0 & I_{n-k}
    \end{bmatrix}.
\]
Since $\sin \theta_i \leq \max_i \sin \theta_i = \left\|
\begin{bmatrix}
    \sin \Theta \\
    0
\end{bmatrix}
\right\|_2 = \|\epsilon N\|_2 \leq \epsilon$, we have $1 - \cos \theta_i \leq 1 - \sqrt{1-\epsilon^2} = \epsilon^2 / (1 + \sqrt{1-\epsilon^2})$ and further $\|I - (I + \epsilon^2 \widetilde{N})\|_2 = \max\{\max_i|1 - \cos \theta_i|, 0\} = \max_i (1 - \cos \theta_i) \leq \epsilon^2 / (1 + \sqrt{1-\epsilon^2})$.
}.

By Weyl's inequality, the RR approximation to the leading eigenvalues is $O(\epsilon^2 \lambda_1)$ such that
\[
    \begin{aligned}
        \left|\lambda_{i}(A) - \lambda_{i}\left(Q^\top A Q\right)\right| & \leq \|\Lambda_1 - Q^\top A Q\|_2 \\ 
        & = \|-\epsilon^2 \left(M^\top \Lambda_1 + \Lambda_1 M - N^\top \Lambda_2 N\right) - \epsilon^4 M^\top \Lambda_1 M\|_2 \\
        & \leq 2 \epsilon^2 \lambda_1 + \epsilon^4 \lambda_1 + \epsilon^2 \lambda_{k+1} 
        \\
        & = \epsilon^2 \lambda_1 \left(2 + \epsilon^2 + \frac{\lambda_{k+1}}{\lambda_1}\right) \\
        & = C_{RR} \cdot \epsilon^2 \lambda_1, \quad i=1,...,k,
    \end{aligned}
\]
where the constant depends on $A$ such that $C_{RR} := 2 + \epsilon^2 + \lambda_{k+1} / \lambda_1 \leq 4$.
Then, by Theorem~\ref{thm:Nystom-vs-SVD-vs-RR-Theorem} we have
\[
    \left|\lambda_{i}(A) - \sigma_{i}\left(A Q\right)\right| \leq \left|\lambda_{i}(A) - \lambda_{i}\left(Q^\top A Q\right)\right| \leq C_{RR} \cdot \epsilon^2 \lambda_1, \quad i=1,...,k.
\]

Finally, we analyze the accuracy of the \Nystrom\ eigenvalue approximations. Define an $n \times n$ orthogonal matrix $Q_F$ by $Q_{F} = \begin{bmatrix} Q & Q_{\perp} \\ \end{bmatrix}$. Under similarity transformation, the \Nystrom\ approximation error $A - A \langle Q \rangle$ has non-zero structure in the $Q_{F}$ coordinate such that
\begin{equation}
    \label{eq:eigenvalue-perturbation-eqn}
    \begin{aligned}
        & Q_{F}^\top (A - A \langle Q \rangle) Q_{F} \\
        & =
        \begin{bmatrix}
        Q^\top A Q & Q^\top A Q_{\perp} \\
        Q_{\perp}^\top A Q & Q_{\perp}^\top A Q_{\perp}
        \end{bmatrix}
        \\
        & -
        \begin{bmatrix}
        Q^\top A Q & (Q^\top A Q) (Q^\top A Q)^{\dagger} (Q^\top A Q_{\perp}) \\
        (Q_{\perp}^\top A Q) (Q^\top A Q)^{\dagger} (Q^\top A Q) & (Q_{\perp}^\top A Q) (Q^\top A Q)^{\dagger} (Q^\top A Q)
        \end{bmatrix} \\
        & =
        \begin{bmatrix}
            0 & 0 \\
            0 & F
        \end{bmatrix},
    \end{aligned}
\end{equation}
where $F := Q_{F}^\top A Q_{F}/ Q^\top A Q$ is the Schur complement of $Q^\top A Q$ in $Q_{F}^\top A Q_{F}$ and we make the generic assumption that $Q^\top A Q$ is full rank \footnote{So, the pseudoinverse is simply the inverse $(Q^\top A Q)^{\dagger}=(Q^\top A Q)^{-1}$; this assumption holds as long as $Q$ is orthonormal in the form of \eqref{eq:transformed-Q-block-form} and either 
\begin{enumerate}
    \item $A \succ 0$; or 
    \item $A \succeq 0$ but $\lambda_k > 0$ with $\epsilon < 1$; this yields $\lambda_{\min} (Q^\top A Q) \geq \lambda_{min} \left((I + \epsilon^2 M)^\top \Lambda_1 (I + \epsilon^2 M)\right) \geq \lambda_{k} \|I + \epsilon^2 M\|_2^2 = \lambda_{k} (1 - \|\epsilon N\|_2^2) \geq \lambda_{k} (1 - \epsilon^2) > 0$.
\end{enumerate}}. In other words, the differences between the leading $k$ eigenvalues of $A$ (exact values) and those of $A \langle Q \rangle$ (approximate values) only results from the perturbation $F$ to
\[
    Q_{F}^\top A Q_{F}
    =
    \begin{bmatrix}
        Q^\top A Q & E^\top \\
        E & Q_{\perp}^\top A Q_{\perp} \\
    \end{bmatrix},
\]
where
\[
    \|E\|_2 = \|Q_{\perp}^\top A Q\|_2 \leq \epsilon \lambda_1 \left(1 + \epsilon^2\right) + \epsilon \lambda_{k+1} \left(1 + \epsilon^2 \left(\frac{1}{1 + \sqrt{1 - \epsilon^2}}\right)\right).
\]
Since $Q_{F}^\top A Q_{F}$ is a nearly block diagonal matrix, the difference between the leading $i$-th eigenvalue of the block Hermitian matrix $Q_{F}^\top A Q_{F}$ and the matrix after it is perturbed by \eqref{eq:eigenvalue-perturbation-eqn} (i.e. $Q_{F}^\top A \langle Q \rangle Q_{F}$) is bounded above by $O\left( (\left\|E\right\|_2 / \lambda_{i})^2 \left\|F\right\|_2\right)$, as is studied in \cite[Theorem 3.2]{nakatsukasa2012eigenvalue}. More precisely,
\begin{equation}
    \label{eq:Nystrom-perturbation-eq}
    \left| \lambda_{i}(Q_{F}^\top A Q_{F}) - \lambda_{i}(Q_{F}^\top A \langle Q \rangle Q_{F}) \right|
    \leq
    \|F\|_2 \left(\frac{\|E\|_2}{\min|\lambda_i - \lambda(Q_{\perp}^\top A Q_{\perp})| - 2 \|F\|_2}\right)^2.
\end{equation}
That is, the leading eigenvalues of $Q_{F}^\top A Q_{F}$ are extremely insensitive to the structured perturbation \eqref{eq:eigenvalue-perturbation-eqn}. 

We now only need an upper bound for $\left\|F\right\|_2$ to bound \eqref{eq:Nystrom-perturbation-eq}. Let 
\[
    Q_F^\top A Q_F = L L^\top
\]
be the full Cholesky factorization of the PSD matrix $Q_F^\top A Q_F$, then the perturbation $F$ is equivalent to the matrix outer product \cite{zhang2006schur}: using MATLAB notation, 
\[
    L / L(1:k, 1:k) \cdot (L / L(1:k, 1:k) )^\top = L(k:n, k:n) \cdot L(k:n, k:n)^\top.
\]
In this case, $F$ can be seen as the remainder term after $k$ steps of applying the pivoted partial Cholesky factorization to $Q_{F}^\top A Q_{F}$ \cite{chen2025randomly}. Then, we have
\[
    F \succeq 0, \quad Q_{\perp}^\top A Q_{\perp} - F = Q_{\perp}^\top A \langle Q \rangle Q_{\perp} \succeq 0
\]
and further
\[
    \begin{aligned}
        \left\|F\right\|_2
        & \leq
        \left\|Q_{\perp}^\top A Q_{\perp}\right\|_2 \\
        & =
        \max_{x \in \image(Q_{\perp}), \|x\|_2 = 1} x^\top A x \\
        & =
        \max_{y \in \realR^{n-k}, \|y\|_2 = 1} (Q_{\perp} y)^\top A (Q_{\perp} y) \\
        & = \max_{y \in \realR^{n-k}, \|y\|_2 = 1}
        \left[
        \left(\epsilon \widetilde{M} y\right)^\top \Lambda_1
        \left(\epsilon \widetilde{M} y\right) \right. \\
        & \qquad\qquad\qquad\left.
        + \left(\left(I_{n-k} + \epsilon^2 \widetilde{N}\right) y\right)^\top
        \Lambda_2
        \left(\left(I_{n-k} + \epsilon^2 \widetilde{N}\right) y\right)
        \right] \\
        & \leq \lambda_{1} \left\|\epsilon \widetilde{M} y\right\|_2^2 + \lambda_{k+1} \left\|\left(I_{n-k} + \epsilon^2 \widetilde{N}\right) y\right\|_2^2\\
        & = \lambda_{1} \left\|\epsilon \widetilde{M} y\right\|_2^2 + \lambda_{k+1}\left(1 - \left\|\epsilon \widetilde{M} y\right\|_2^2\right)\\
        & \leq \lambda_{k+1} + \epsilon^2 (\lambda_1 -\lambda_{k+1}). 
    \end{aligned}
\]
Thus, we have
\[
    \begin{aligned}
        & \left| \lambda_{i}(Q_{F}^\top A Q_{F}) - \lambda_{i}(Q_{F}^\top A \langle Q \rangle Q_{F}) \right| \\
        & \leq
        \|F\|_2 \left(\frac{\|E\|_2}{\min|\lambda_i - \lambda(Q_{\perp}^\top A Q_{\perp})| - 2 \|F\|_2}\right)^2 \\
        & \leq
        \left(\frac{\|E\|_2}{\lambda_i - 3\lambda_{k+1} - 3 \epsilon^2 (\lambda_1 - \lambda_{k+1})}\right)^2 \|F\|_2 \\
        & \leq
        \left(\frac{\lambda_1 (1+ \epsilon^2) + \lambda_{k+1} \left(1 + \epsilon^2 \left(\frac{1}{1 + \sqrt{1-\epsilon^2}}\right)\right)}{\alpha_{i} \lambda_i}\right)^2 \left(\epsilon^2\lambda_{k+1} + \epsilon^4 (\lambda_1 - \lambda_{k+1}   )\right) \\
        & = C_{Nys,i} \cdot \left(\epsilon^2\lambda_{k+1} + \epsilon^4 (\lambda_1 - \lambda_{k+1})\right) \\
        & i = 1, ..., k,
    \end{aligned}  
\]
where $C_{Nys, i} := \left(\lambda_1 (1+ \epsilon^2) + \lambda_{k+1} \left(1 + \epsilon^2 \left(1 / 1 + \sqrt{1-\epsilon^2}\right)\right) / \alpha_{i} \lambda_i\right)^2$ and we assume $\lambda_i - 3\lambda_{k+1} - 3 \epsilon^2 (\lambda_1 - \lambda_{k+1}) = \alpha_i \lambda_i)$ for some $0 < \alpha_{i} < 1$ in the third inequality. This completes the proof.
\end{proof}
\begin{remark}
    \label{remark:Higher-order-of-Nystrom-SVD-vs-RR}
    For the leading few eigenvalues, i.e., all the $i$-th eigenvalues such that $\lambda_i \approx \lambda_1$, the \svdmethod's approximations usually provide about half of the RR error in the sense that
    \[
    |\lambda_i(A) - \sigma_i(AQ)| \lesssim \frac{1}{2} |\lambda_i(A) - \lambda_I(Q^\top A Q)|.
    \]
    This is due to
    \[
    \begin{aligned}
        |\lambda_i(A)^2 - \sigma_i(AQ)^2| & = |\lambda_i(A)^2 - \lambda_i(Q^\top A^2 Q)| \\ 
        & \leq \|\Lambda_1^2 - Q^\top A^2 Q\|_2 \\
        & = \|\epsilon^2 (M^\top \Lambda_1^2 + \Lambda_1^2 M + N^\top \Lambda_2^2 N) + \epsilon^4 M^\top \Lambda_1^2 M\|_2 \\
        & \leq \epsilon^2 (2 \lambda_1^2 + \lambda_{k+1}^2) + \epsilon^4 \lambda_1^2 
    \end{aligned}
    \]
    and
    \[
    \sigma_i(AQ) \geq \lambda_i(A) - C_{RR} \cdot \epsilon^2 \lambda_1,
    \]
    we have
    \begin{equation}
        \label{eq:Upper-bound-SVD}
        \begin{aligned}
            |\lambda_i(A) - \sigma_i(AQ)| & \leq \frac{|\lambda_i(A)^2 - \sigma_i(AQ)^2|}{\lambda_i(A) + \sigma_i(AQ)} \\ 
            & \leq \frac{\epsilon^2 (2 \lambda_1^2 + \lambda_{k+1}^2) + \epsilon^4 \lambda_1^2}{2\lambda_i(A) - C_{RR} \cdot \epsilon^2 \lambda_1}  \\
            & \leq \frac{\epsilon^2 (2 \lambda_1^2 + \lambda_{k+1}^2) + \epsilon^4 \lambda_1^2}{2\left(\lambda_i(A) - 2 \epsilon^2 \lambda_1\right)} \\
            & = \epsilon^2 \lambda_1 \left(2 + \epsilon^2 + \frac{\lambda_{k+1}^2}{\lambda_1^2}\right) \frac{\lambda_1}{2\left(\lambda_i(A) - 2 \epsilon^2 \lambda_1\right)} \\
            & \leq C_{RR} \cdot \epsilon^2 \lambda_1 \frac{\lambda_1}{2\left(\lambda_i(A) - 2 \epsilon^2 \lambda_1\right)} \\
            & = C_{SVD,i} \cdot \epsilon^2 \lambda_1.
        \end{aligned}
    \end{equation}
\end{remark}
\begin{remark}
    \label{remark:Higher-order-of-Nystrom}
    If $\lambda_1 / \lambda_i$, $\alpha_i$ are $O(1)$ and $\epsilon \ll 1$, then $C_{Nys}$ is $O(1)$. The first condition requires
    \begin{equation}
        \label{eq:Nys-higher-order-condition-1}
        C_i \lambda_i = \lambda_1, \quad \text{with moderate $O(1)$ constant } C_i
    \end{equation}
    and the second requires
    \begin{equation}
        \label{eq:Nys-higher-order-condition-2}
        \lambda_i \gg \lambda_{k+1}.
    \end{equation}
    Given the above conditions, we have
    \[
        C_{Nys,i} \leq \left(\frac{1+\epsilon^2}{\alpha_i}\right)^2 \left(C_i + \frac{1-\alpha_i}{3}\right)^2 \leq \left(\frac{2}{\alpha_i}\right)^2 \left(C_i + \frac{1-\alpha_i}{3}\right)^2.
    \]
    These imply that for some small $\tilde{k} < k$ such that conditions \eqref{eq:Nys-higher-order-condition-1} and \eqref{eq:Nys-higher-order-condition-2} are satisfied, the \Nystrom\ approximations to the eigenvalues $\lambda_1, ..., \lambda_{\tilde{k}}$ approximately $\lambda_{1}/\lambda_{k+1}$ times more accurate than the \svdmethod\ and RR. This can happen when the eigenvalues of $A$ decay rapidly.
\end{remark}

\subsection{Rayleigh-Ritz is better for trailing eigenvalue approximation}
Let us now discuss the accuracy of the methods for approximating the trailing eigenvalues. We first state a lemma that provides a lower bound for the RR approximations to the trailing eigenvalues.
\begin{lemma}
    \label{lemma:exact-vs-apprximate-eigenvalue-small}
    Suppose $Q$ is an orthonormal basis of $\subspace$, then
    \begin{equation}
        \lambda_i(Q^\top A Q) \geq \lambda_{n - \dim(\subspace) + i}(A), \quad (i = 1, ..., \dim(\subspace)).
    \end{equation}
\end{lemma}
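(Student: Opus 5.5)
This is the lower half of the Cauchy interlacing theorem for the compression $Q^\top A Q$, and we prove it directly from the Courant--Fischer min-max characterization. Throughout, $A$ is symmetric and $k := \dim(\subspace)$. Positive semi-definiteness is not needed.

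\emph{Characterization.} Recall that
\[
\lambda_i(Q^\top A Q) = \max_{\substack{\mathcal{S}\subseteq\realR^{k}\\ \dim\mathcal{S}=i}} \ \min_{\substack{y\in\mathcal{S}\\ \|y\|_2=1}} y^\top Q^\top A Q\, y .
\]
Since $Q$ has orthonormal columns, $x = Qy$ satisfies $\|x\|_2 = \|y\|_2$. Hence this equals the same max-min over $i$-dimensional subspaces $\mathcal{T}\subseteq \image(Q)$ of the Rayleigh quotient $x^\top A x$ with $\|x\|_2=1$.

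\emph{Choice of subspace.} Take $\mathcal{T} := \image(Q)\cap \image([u_1,\dots,u_{n-k+i}])$. By the dimension formula,
\[
\dim\mathcal{T} \geq k + (n-k+i) - n = i .
\]
Let $\mathcal{T}'\subseteq\mathcal{T}$ be any $i$-dimensional subspace; it is admissible in the max. Every unit $x\in\mathcal{T}'$ lies in the span of the leading $n-k+i$ eigenvectors of $A$, so
\[
x^\top A x = \sum_{j\le n-k+i} \lambda_j (u_j^\top x)^2 \geq \lambda_{n-k+i}.
\]
Taking the minimum over $x\in\mathcal{T}'$ and then the maximum over admissible subspaces gives $\lambda_i(Q^\top A Q) \geq \lambda_{n-k+i}(A)$.

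The only step needing care is the dimension count: the intersection must have dimension at least $i$ for every $i = 1,\dots,k$. This holds because $n-k+i \le n$. An alternative route is to extend $Q$ to the orthogonal matrix $Q_F = [Q, Q_\perp]$ and apply Cauchy interlacing to the leading $k\times k$ principal submatrix $Q^\top A Q$ of $Q_F^\top A Q_F$, which has the same eigenvalues as $A$.
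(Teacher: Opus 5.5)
Your proof is correct. The Courant--Fischer reduction to $i$-dimensional subspaces of $\image(Q)$, the dimension count $\dim\bigl(\image(Q)\cap\image([u_1,\dots,u_{n-k+i}])\bigr)\ge i$, and the bound $x^\top A x\ge\lambda_{n-k+i}$ on that intersection are all sound, and you are right that only symmetry of $A$ is used.

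The paper states this lemma without proof, implicitly as the lower half of Cauchy interlacing (Poincar\'e separation) for the compression $Q^\top A Q$. That is exactly your alternative route via the leading principal submatrix of $Q_F^\top A Q_F$, and your direct min-max argument is the standard self-contained proof of that fact.
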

If $\subspace$ is an approximation to the invariant subspace $\trailingsubspace$, then we have
\begin{equation}
    \lambda_i(A \langle Q \rangle) \geq \sigma_i(AQ) \geq \lambda_i(Q^\top A Q) \geq \lambda_{n - \dim(\subspace) + i}(A),
\end{equation}
where $i = 1, ..., \dim(\subspace)$. Thus, the trailing eigenvalue approximations 
\begin{align}
\hat{\lambda}_{j}^{\text{\rm(RR)}} &:= \lambda_{j - n + \dim(\subspace)}(Q^\top A Q), \nonumber \\
\hat{\lambda}_{j}^{\text{\rm(SVD)}} &:= \sigma_{j - n + \dim(\subspace)}(A Q), \nonumber \\
\hat{\lambda}_{j}^{\text{\rm(Nys)}} &:= \lambda_{j - n + \dim(\subspace)}\bigl(A \langle Q \rangle\bigr) \nonumber
\end{align}
have the reverse accuracy pattern
\begin{equation}
    \label{eq:smallest-eigenvalue-accuracy-relation}
    \left| \lambda_{j}(A) - \hat{\lambda}_{j}^{\text{(RR)}}\right| 
    \leq
    \left| \lambda_{j}(A) - \hat{\lambda}_{j}^{\text{(SVD)}}\right|
    \leq
    \left| \lambda_{j}(A) - \hat{\lambda}_{j}^{\text{(Nys)}}\right|,
\end{equation}
where $j = n-\dim(\subspace)+1, ..., n$. This shows that \Nystrom\  is not recommended when the target eigenvalues are the smallest ones. We present a remedy to this issue in Section~\ref{sec:trailing}. 

\subsection{Eigenvector approximation accuracy}
\label{subsec:Eigenvector-analysis-subsection}
We are currently unable to prove results that compare the accuracy of approximate eigenvectors from RR, SVD-$Q\tilde{V}$, SVD-$\tilde{U}$, and \Nystrom. Nonetheless, it is worth recalling that RR and SVD-$Q\tilde{V}$ seek eigenvectors from $\image(Q)$ while SVD-$\tilde{U}$ and \Nystrom\ seek from $\image(AQ)$. We can show that $\image(AQ)$ is a better subspace to extract eigenvectors than $\image(Q)$ when the dominant eigenspace is desired, whereas $\image(Q)$ is better when the trailing subspace is desired. In other words, we will illustrate that one extra subspace iteration $Q \mapsto AQ$ benefits the approximation to the invariant subspace $\leadingsubspace$ but harms that to the $\trailingsubspace$.

Let any $v \in \mathbb{R}^n$ be decomposed into $v = v_1 + v_2$, where $v_1 \in \mathcal{U}_1, v_2 \in \mathcal{U}_{1,\perp}$. We have $\projector_{\leadingsubspace} Av = A v_1 \in \mathcal{U}_1, \projector_{\mathcal{U}_{1,\perp}} Av = Av_2 \in \mathcal{U}_{1,\perp}$ since $\mathcal{U}_1$ is an invariant subspace of $A$, then
\[
    \begin{aligned}
        \sin^2 \angle(Av, \leadingsubspace) = \frac{\|\projector_{\mathcal{U}_{1,\perp}} Av\|_2^2}{\|Av\|_2^2} & = \frac{\|Av_2\|_2^2}{\|Av_1\|_2^2 + \|A v_2\|_2^2} \\
        & \leq \frac{\lambda_{k+1}^2 \|v_2\|_2^2}{\lambda_{k}^2 \|v_1\|_2^2 + \lambda_{k+1}^2 \|v_2\|_2^2} \\
        & = \frac{\|v_2\|_2^2}{\lambda_{k}^2 / \lambda_{k+1}^2 \|v_1\|_2^2 + \|v_2\|_2^2} \\
        & \leq \frac{\|v_2\|_2^2}{\|v_1\|_2^2 + \|v_2\|_2^2} = \sin^2 \angle(v, \leadingsubspace).
    \end{aligned}
\]
Where in the first inequality we used the facts that $\lambda_{k}^2 \|v_1\|_2^2 \leq \|A v_1\|_2^2$, $\|A v_2\|_2^2 \leq \lambda_{k+1}^2 \|v_2\|_2^2$ and $f(x) = x/(a + x)$ is increasing in $x > -a$. So,
\[
    \begin{aligned}
        \sin \angle(\image(AQ), \leadingsubspace) & = \sup_{v \in \image(Q), Av \neq 0} \sin \angle(Av, \leadingsubspace) \\
        & \leq \sup_{v \in \image(Q), v \neq 0} \sin \angle(v, \leadingsubspace) = \sin \angle(\image(Q), \leadingsubspace).
    \end{aligned}
\]
Similarly, for any $w \in \mathbb{R}^n$, we can decompose it into $w = w_1 + w_2$ with $w_1 \in \trailingsubspace, w_2 \in \mathcal{U}_{2,\perp}$ to show $\sin^2 \angle(Aw, \trailingsubspace) \geq \sin^2 \angle(w, \trailingsubspace)$ and further 
\[
    \sin \angle(\image(AQ), \trailingsubspace) \geq \sin \angle(\image(Q), \trailingsubspace).
\]

\section{Numerical experiments}
\label{sec:Numerical-illustrations-section}
Here we conduct experiments to illustrate the theoretical findings.
\subsection{Approximating the leading eigenpairs} 
Firstly, we illustrate Theorem~\ref{thm:Nystom-vs-SVD-vs-RR-Theorem}, and the leading eigenvalue approximation accuracy of the PSD matrix. For convenience, we scale the matrix such that $\|A\|_2=1$. In Figure~\ref{fig:leading-eigenvalue-accuracy-numerical-results-fast-decaying-spectrum} and Figure~\ref{fig:leading-eigenvalue-accuracy-numerical-results-slow-decaying-spectrum}, the spectrum of matrix $A$ decays exponentially and algebraically to $10^{-20}$, respectively. The orthogonal basis $Q$ is generated by the randomized rangefinder without power iteration \cite[Algorithm~9]{martinsson2020randomized} to approximate the invariant subspace $\leadingsubspace$ spanned by the leading eigenvectors of $A$. The right plot shows the approximate eigenvalues (Theorem~\ref{thm:Nystom-vs-SVD-vs-RR-Theorem}). The left plot illustrates the leading eigenvalue approximation accuracy, where the \Nystrom\ method performs the best, followed by \svdmethod, and RR is the least accurate, as predicted in Section~\ref{subsec:Leading-eigenvalue-subsection}. Also, we can observe a larger slope from the \Nystrom\ approximation error compared to the others when $A$'s spectrum is rapidly decaying. This phenomenon is less visible when the spectrum does not decay rapidly. 

Figure~\ref{fig:higher-order-of-Nystrom-figure} illustrates the upper bounds \eqref{eq:Upper-bound-RR}, \eqref{eq:Upper-bound-Nys}, and \eqref{eq:Upper-bound-SVD} for the eigenvalue approximation errors. It also demonstrates the higher-order accuracy phenomenon of \Nystrom\ for leading eigenvalue approximations: when the spectrum of $A$ decays rapidly, the leading few \Nystrom\ approximations can be more accurate than those produced by the \svdmethod\ and RR by a factor on the order of $\lambda_{1}(A)/\lambda_{k+1}(A)$; see Theorem~\ref{thm:Higher-order-accuracy-Nystrom-theorem}, Remark~\ref{remark:Higher-order-of-Nystrom-SVD-vs-RR} and Remark~\ref{remark:Higher-order-of-Nystrom}. Note that in Figure~\ref{fig:higher-order-of-Nystrom-figure} the orthonormal basis $Q$ is computed using \eqref{eq:transformed-Q-block-form}, rather than a randomized range finder. This allows us to control the subspace approximation error $\|\sin \Theta(\image(U_1), \image(Q))\|_2$ directly. In contrast, although a randomized range finder can capture the first few dominant eigenvectors accurately, the $k$th eigenvector is usually not, and thus the $\epsilon$ would be large, close to $1$, making the bounds less informative. 

\begin{figure}[htbp]
    \centering
    \includegraphics[scale = 0.3]{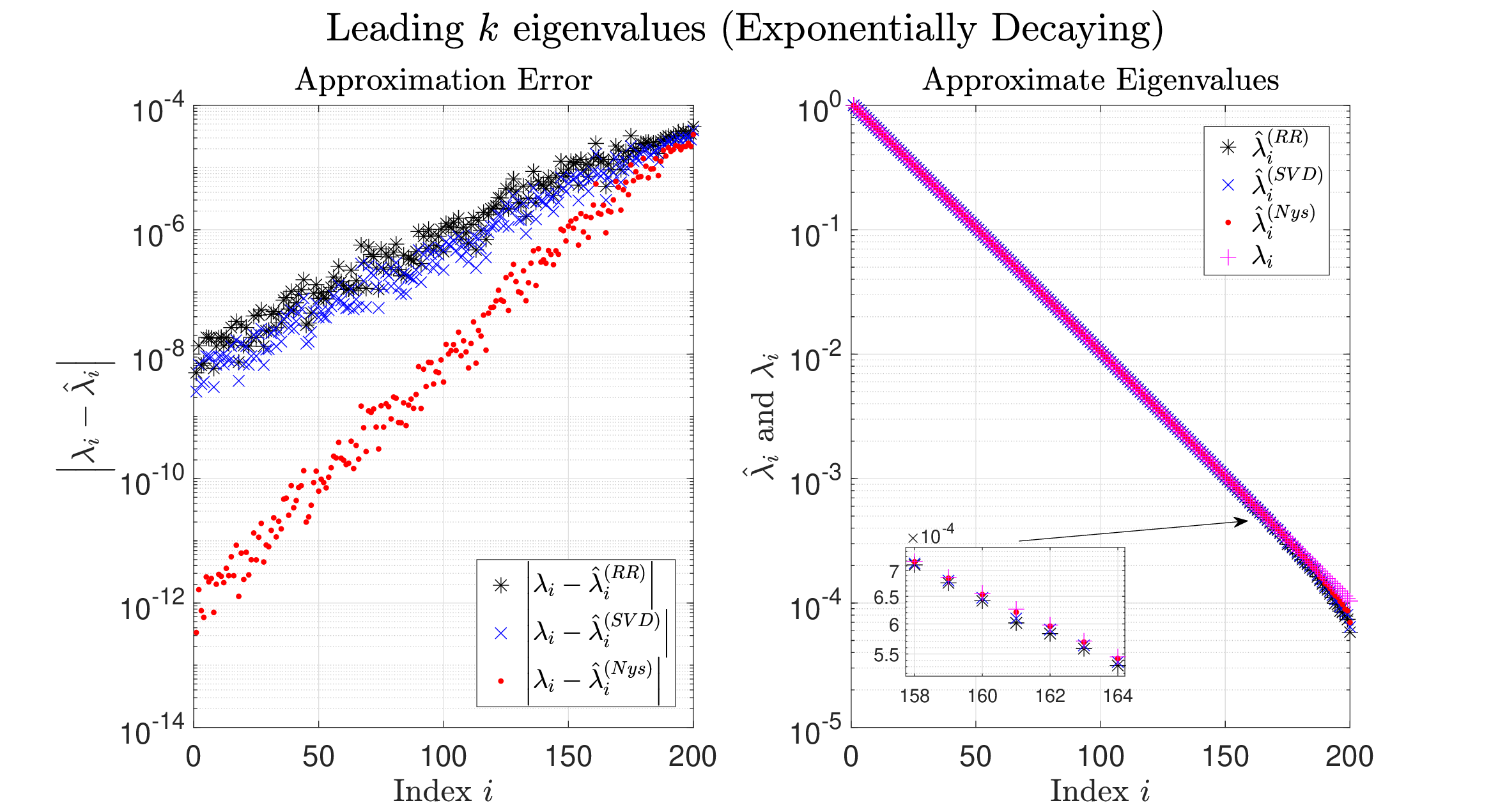}
    \caption{Accuracy of the leading eigenvalue approximations for \Nystrom, \svdmethod, and RR, where $n=1000$, $k=200$ and $A$ has an exponentially decaying spectrum from $1$ to $10^{-20}$. The figures illustrate Theorem~\ref{thm:Nystom-vs-SVD-vs-RR-Theorem} (right) and the approximate leading eigenvalues accuracy analysis (\ref{eq:largest-eigenvalue-accuracy-relation}) (left). A larger slope of the \Nystrom\ approximation error than others illustrates its higher-order accuracy with a fast-decaying spectrum.}
    \label{fig:leading-eigenvalue-accuracy-numerical-results-fast-decaying-spectrum}
\end{figure}

\begin{figure}[htbp]
    \centering
    \includegraphics[scale = 0.3]{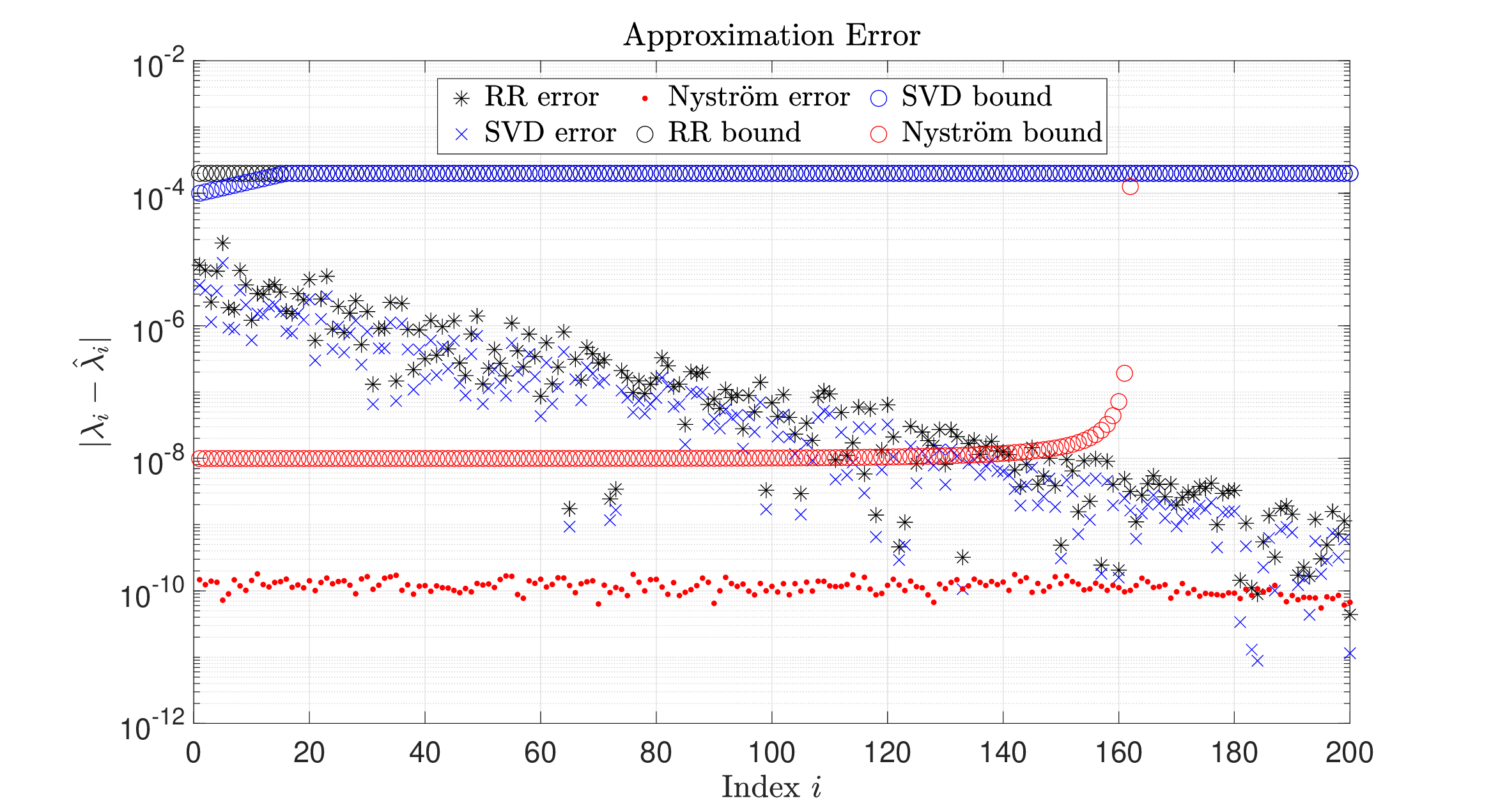}
    \caption{Numerical illustration for Theorem~\ref{thm:Higher-order-accuracy-Nystrom-theorem}, Remark~\ref{remark:Higher-order-of-Nystrom-SVD-vs-RR} and Remark~\ref{remark:Higher-order-of-Nystrom}. Only the leading few \Nystrom\ eigenvalue approximations are approximately $\lambda_{1}(A) / \lambda_{k+1}(A)$ (up to constants) more accurate than the \svdmethod\ and RR when $A$'s spectrum decays rapidly, where $\epsilon=0.01$, $n=1000$, $k=200$, $A$ has an exponentially decaying spectrum from $1$ to $10^{-20}$ and $Q$ is generated by \eqref{eq:eps-approximation-Q-formula}. The RR and \Nystrom\ bounds follow \eqref{eq:Upper-bound-RR}, \eqref{eq:Upper-bound-Nys} and the SVD bound here is $\min\left(C_{SVD,i} \cdot \epsilon^2 \lambda_1, C_{RR} \cdot \epsilon^2 \lambda_1\right)$. Note that the SVD and \Nystrom\ bounds depend on $i$, and \Nystrom\ bound exists if $\alpha_i > 0$.
    }
    \label{fig:higher-order-of-Nystrom-figure}
\end{figure}

\begin{figure}[htbp]
    \centering
    \includegraphics[scale = 0.3]{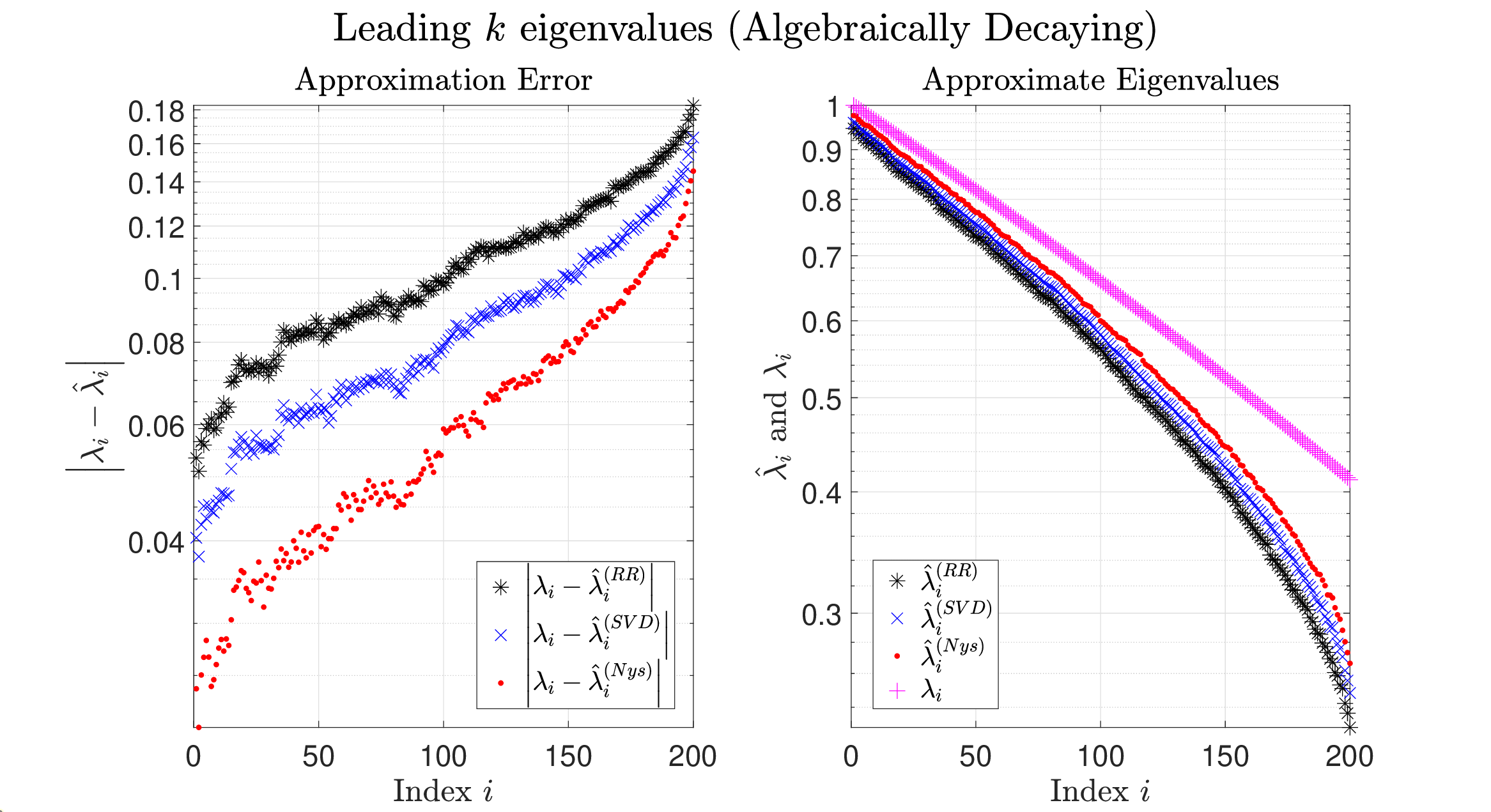}
    \caption{$A$ has a linearly decaying spectrum from $1$ to $10^{-20}$. The higher-order accuracy of the \Nystrom\ method is not seen.}
    \label{fig:leading-eigenvalue-accuracy-numerical-results-slow-decaying-spectrum}
\end{figure}

Next, we conduct numerical experiments for the leading eigenvector approximation accuracy. The accuracy of eigenvector approximations are measured by $\sin \angle (u_{i}, \hat{u}_{i})$. Figure~\ref{fig:leading-eigenvector-accuracy-numerical-results} illustrates the accuracy when $A$ has an exponentially and an algebraically decaying spectrum, respectively. We can see a similar accuracy pattern to that in the leading eigenvalue approximation if $A$ has a fast-decaying spectrum. In contrast, none of the methods provides accurate approximations if $A$'s spectrum does not decay rapidly; this is because then the randomized rangefinder captures the invariant subspace $\leadingsubspace$ less accurately. Also, \Nystrom\ and SVD-$\tilde{U}$ provide better approximations than SVD-$Q\tilde{V}$ and RR, illustrating that $\image(AQ)$ is a better choice for finding the approximate leading eigenvectors than the given subspace $\image(Q)$, as we discused in Section~\ref{subsec:Eigenvector-analysis-subsection}.

\begin{figure}[htbp]
    \centering
    \includegraphics[scale = 0.3]{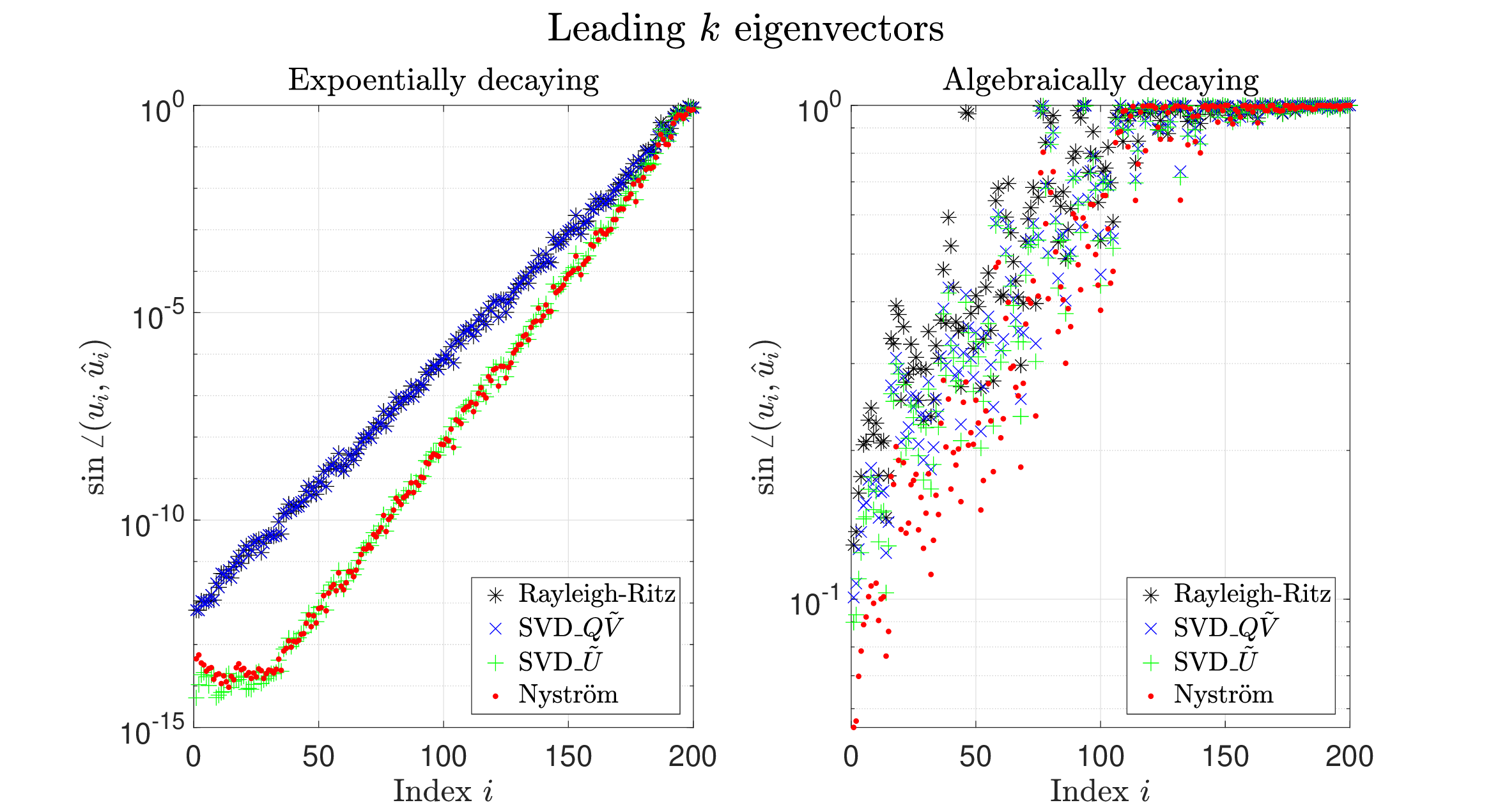}
    \caption{Accuracy of the leading eigenvector approximations, $k=200$ and $A$ has an exponentially (left) and an algebraically (right) decaying spectrum from $1$ to $10^{-20}$. The SVD-$\tilde{U}$ and \Nystrom\ methods provide more accurate approximations than the others, illustrating that $\image(AQ)$ is a better subspace to extract the leading eigenvectors.}
    \label{fig:leading-eigenvector-accuracy-numerical-results}
\end{figure}

\subsection{Approximating trailing eigenpairs}\label{sec:trailing}
Finally, we numerically examine the trailing eigenvalue approximation accuracy analysis (Eq.~\eqref{eq:smallest-eigenvalue-accuracy-relation}) and the performance in trailing eigenvector approximation. We use the same settings as in the experiments for the leading one, except for the spectrum-decay speed. We set the spectrum to decay algebraically and linearly to $10^{-20}$ rather than exponentially. The orthonormal basis $Q$ is generated by perturbing the exact orthonormal basis of the invariant subspace $\trailingsubspace$, i.e., $U_2 + \epsilon G$, where $G$ is a Gaussian matrix with i.i.d entries of zero mean and $1/n$ variance, i.e., $G_{ij} \sim \mathcal{N}(0, 1/n)$. Figure~\ref{fig:trailing-eigenvalue-accuracy-numerical-results} shows the reverse accuracy pattern (Eq.~\eqref{eq:smallest-eigenvalue-accuracy-relation}) in trailing eigenvalue approximation. Figure~\ref{fig:trailing-eigenvector-accuracy-numerical-results} illustrates the accuracy of the trailing eigenvector approximation, where only RR can provide good approximations since $\image(Q)$ can better capture $\trailingsubspace$ than $\image(AQ)$.

\paragraph{Remedy for \Nystrom\ when approximating trailing eigenvalues}
Instead of directly approximating the trailing eigenpairs of $A$, a possible remedy is to exploit the superiority of $\image(AQ)$ in approximating $\leadingsubspace$ (compared to $\image(Q)$) by computing the leading eigenpairs of the shifted and negated matrix $-A + \gamma I$, where $\gamma \geq \lambda_1(A)$, where $\gamma$ can be estimated for example using a small number of iterations of the Lanczos method, together with Weyl's bound using the residual: $\hat{\lambda}_1+\|A\hat{u}_1-\hat{\lambda}_1\hat{u}_1\|_2$ is a practical estimate for the upper bound. Figure~\ref{fig:trailing-eigenvalue-accuracy-shift-version-numerical-results} and Figure~\ref{fig:trailing-eigenvector-accuracy-shift-version-numerical-results} illustrate the accuracy of the eigenvalue and eigenvector approximations using the shift trick, respectively. \Nystrom's method provides better approximations based on RR, and thus better accuracy than the case of the unshifted version.

\begin{figure}[htbp]
    \centering
    \includegraphics[scale = 0.3]{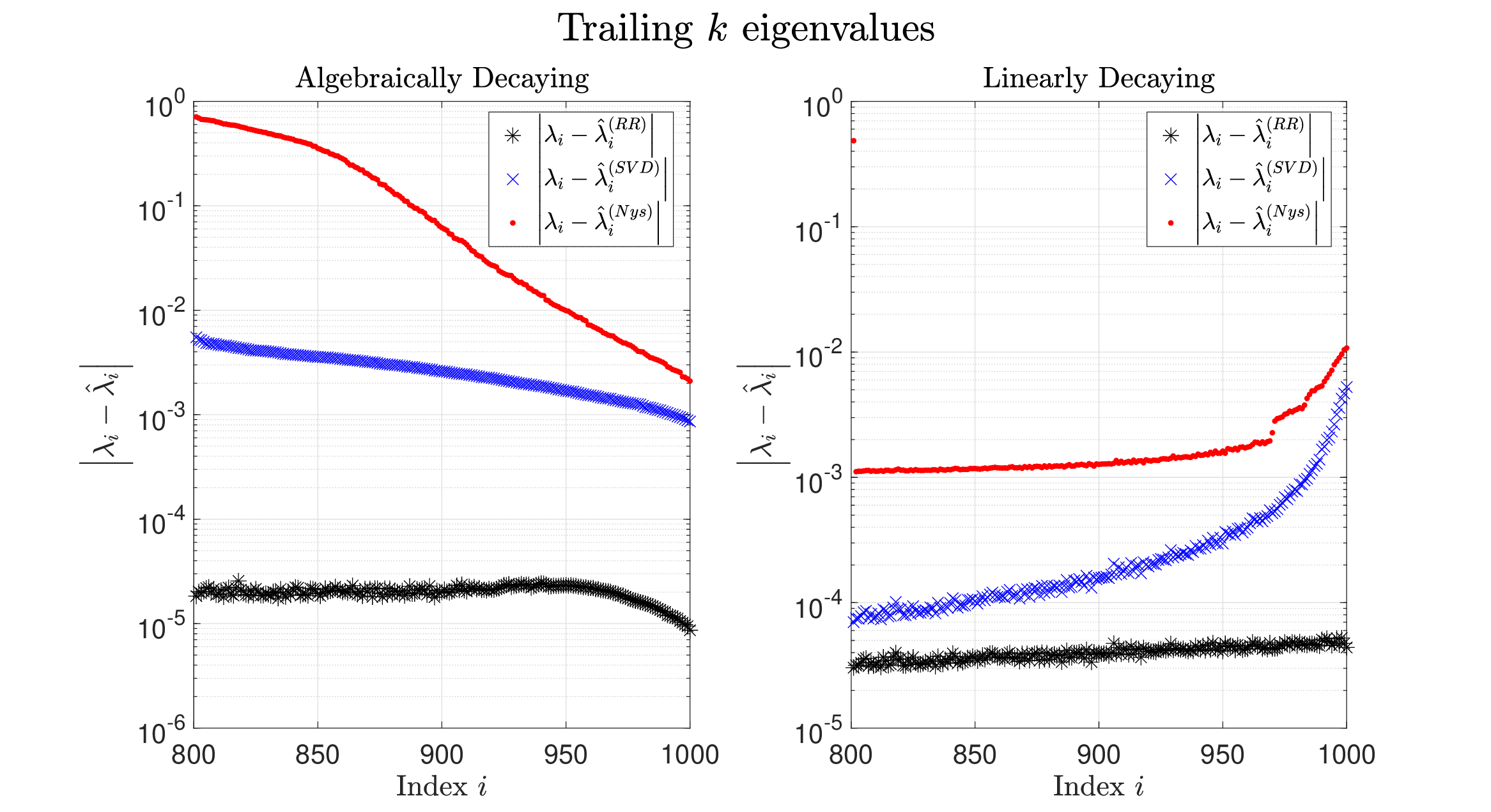}
    \caption{Accuracy comparison of the trailing eigenvalue approximations, where $n=1000$, $k=200$, and $A$ has an algebraically decaying spectrum (left) and a linearly decaying spectrum (right) from $1$ to $10^{-20}$. A reverse accuracy pattern is observed compared to the leading eigenvalue approximation case.}
    \label{fig:trailing-eigenvalue-accuracy-numerical-results}
\end{figure}

\begin{figure}[htbp]
    \centering
    \includegraphics[scale = 0.3]{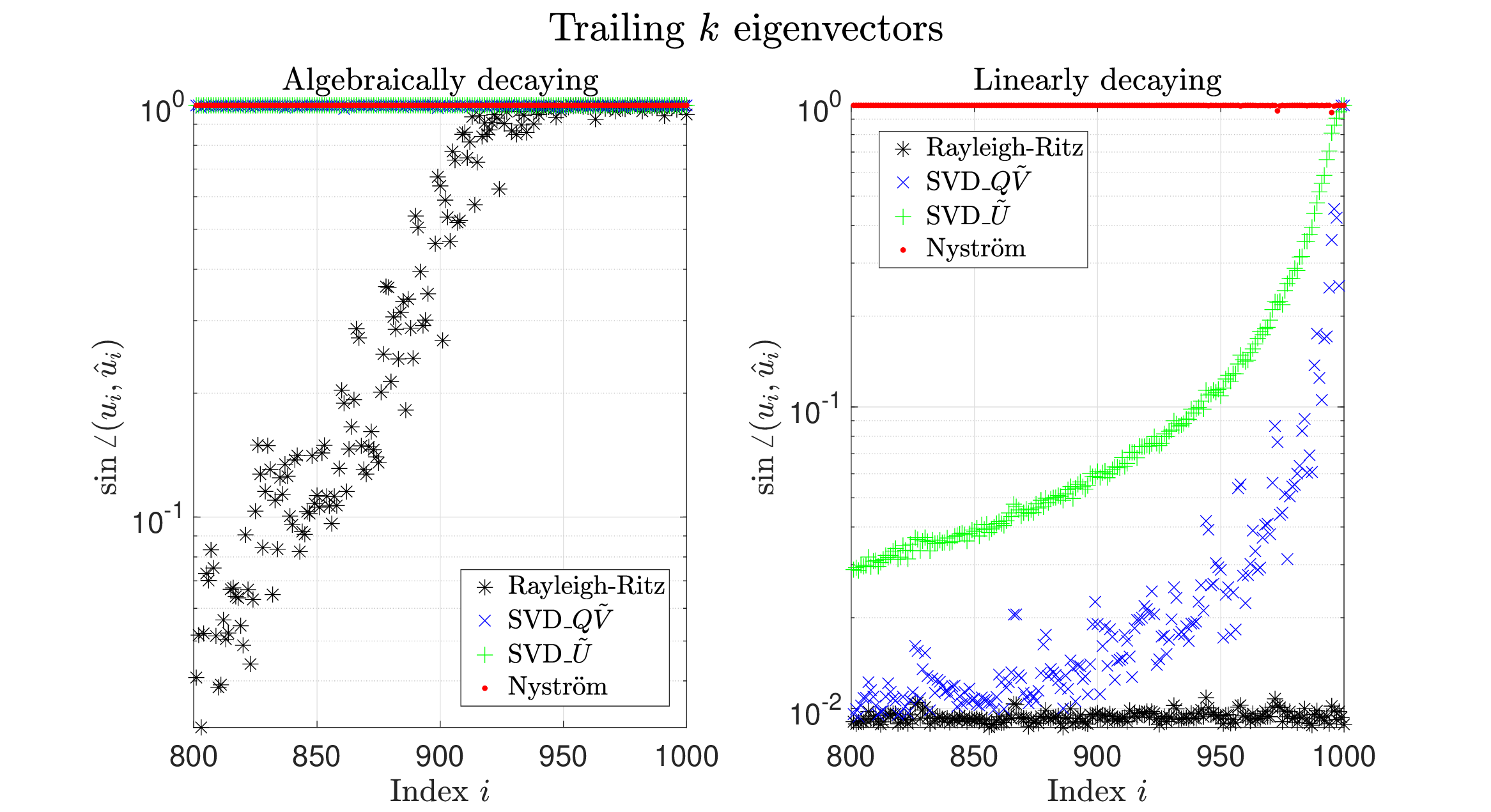}
    \caption{Accuracy comparison of the trailing eigenvector approximations, where $k=200$ and $A$ has an algebraically decaying spectrum (left) and a linearly decaying spectrum (right) from $1$ to $10^{-20}$. Only RR provides good approximations.}
    \label{fig:trailing-eigenvector-accuracy-numerical-results}
\end{figure}

\begin{figure}[htbp]
    \centering
    \includegraphics[scale = 0.3]{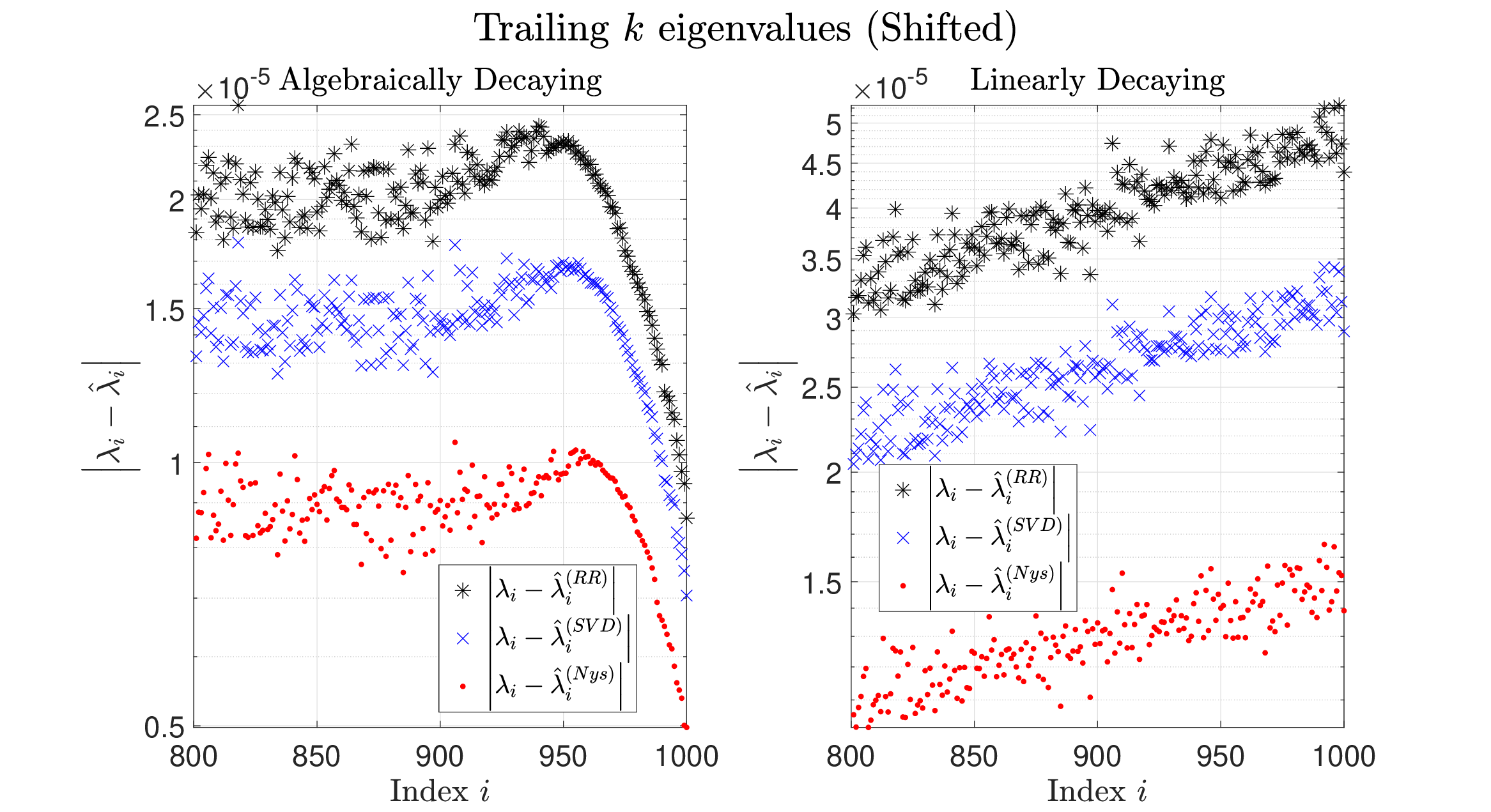}
    \caption{Accuracy of the trailing eigenvector approximation using the shift trick to remedy, where $k=200$ and $A$ has an algebraically decaying spectrum (left) and a linearly decaying spectrum (right) from $1$ to $10^{-20}$. The leading eigenvalues of $-A + \gamma I$ are clustered around $\gamma$. We can benefit from the \Nystrom\ by applying it to the shifted matrix.}
    \label{fig:trailing-eigenvalue-accuracy-shift-version-numerical-results}
\end{figure}

\begin{figure}[htbp]
    \centering
    \includegraphics[scale = 0.3]{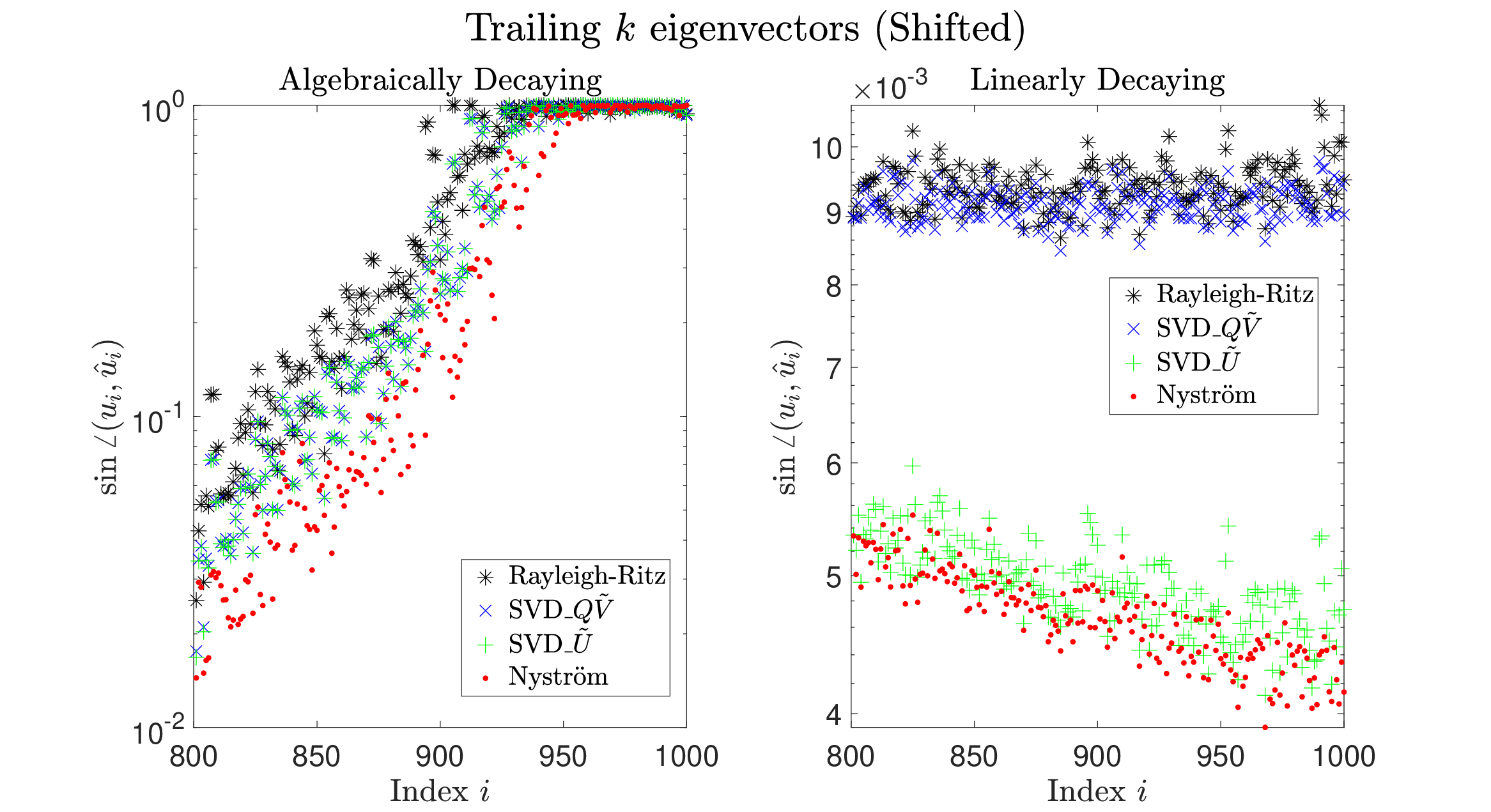}
    \caption{Accuracy of the trailing eigenvector approximations using the shift trick to remedy.}
    \label{fig:trailing-eigenvector-accuracy-shift-version-numerical-results}
\end{figure}

\section{Conclusion and future work}
We have shown how to exploit the PSD property in the extraction step of the subspace methods to improve the accuracy of leading eigenvalue approximations beyond RR via \svdmethod\ and especially \Nystrom. Further, the accuracy of \Nystrom\ is approximately $\lambda_1(A) / \lambda_{k+1}(A)$ times better, where $k$ is the dimensional of the given subspace, compared to the RR and \svdmethod\ for the leading $\tilde{k}$ eigenvalues $(\tilde{k} < k)$ if the matrix satisfies additional conditions, for example, a fast-decaying spectrum. The situation is the opposite for trailing eigenpair approximation. However, we can fix this by working with the matrix $-A + \gamma I$ with $\gamma \geq \lambda_1(A)$. 

Several directions remain for future investigation. First, it would be useful to extend the present analysis of the higher-order accuracy of \Nystrom\ to alternative metrics of subspace approximation accuracy. For example, the projection residual metric $\|(I- QQ^\top)A\|$ that captures the matrix approximation quality rather than only the worst-direction subspace error, as in Section~\ref{subsec:Analysis-High-order-accuracy-Nystrom}, may lead to sharper and more informative eigenvalue error estimates. Second, the current analysis focuses on exterior eigenvalues. A natural next step is to investigate how additional matrix structure, including but not limited to positive semi-definiteness, can be exploited to improve the computation of interior eigenvalues. Third, it would be interesting to study whether analogous accuracy improvements can be obtained for generalized Rayleigh--Ritz approximations in generalized eigenvalue problems.

\bibliographystyle{siamplain}
\bibliography{references}
\end{document}